\documentclass[11pt,reqno]{amsart}

\setlength{\hoffset}{-1in}
\setlength{\voffset}{-1in}
\setlength{\oddsidemargin}{1.2in}
\setlength{\evensidemargin}{1.2in}
\setlength{\textwidth}{5.8in}
\setlength{\textheight}{8.5in}
\setlength{\topmargin}{0.9in}
\setlength{\baselineskip}{14pt}

\usepackage{amssymb, amsfonts, amsmath, mathrsfs, bbm, dsfont, euscript, stmaryrd, verbatim, setspace, mathtools, lineno, cite, extarrows}
\usepackage{textcomp}
\usepackage{tikz}
\usepackage{tikz-cd}
\usepackage[lite]{amsrefs}
\usepackage[colorlinks, colorlinks, linkcolor=blue, anchorcolor=blue, urlcolor=blue, citecolor=blue]{hyperref}
\allowdisplaybreaks[4]
\usepackage{relsize}
\usepackage{mdframed}
\usepackage{framed}

\newcommand{\fK}{{\mathbb K} }
\newcommand{\fR}{{\mathbb R} }
\newcommand{\fC}{{\mathbb C} }



\newcommand{\id}{ {\mathrm{id} } }

\newcommand{\cF}{ {\mathcal{F}} }

\newcommand{\cD}{ {\mathcal{D}} }

\newcommand{\Ker}{ {\mathrm{Ker}} }

\newcommand{\cH}{ {\mathcal{H}} }

\newcommand{\cK}{ {\mathcal{K}} }

\newcommand{\frakg}{ {\mathfrak{g}} }

\newcommand{\frakl}{ {\mathfrak{l}} }

\newcommand{\fraku}{ {\mathfrak{u}} }

\newcommand{\PBW}{ {\mathrm{PBW}} }

\newcommand{\ad}{ {\mathrm{ad}} }

\newtheorem{definition}{Definition}[section]
\newtheorem{lemma}[definition]{Lemma}
\newtheorem{proposition}[definition]{Proposition}
\newtheorem{corollary}[definition]{Corollary}
\newtheorem{theorem}[definition]{Theorem}
\newtheorem{remark}[definition]{Remark}
\newtheorem{example}[definition]{Example}

\newtheorem{Thm}{Theorem}

\setcounter{tocdepth}{4}
\setcounter{secnumdepth}{4}

\numberwithin{equation}{section}

\begin{document}

\title[]
{From smooth dynamical twists to twistors of quantum groupoids}

\thanks{Research partially supported by
	NSFC grant 12301050 and
Research Fund of Nanchang Hangkong University EA202107232 (Cheng),
NSFC grants 12071241 (Chen) and 12271323 (Qiao),
Key Project of Jiangxi Natural Science Foundation grant 20232ACB201004 (Xiang)}

\author{Jiahao Cheng}
\address{Jiahao Cheng, College of Mathematics and Information Science, Center for Mathematical Sciences, Nanchang Hangkong University, Nanchang, 330063, P.R. China}
\email{jiahaocheng@nchu.edu.cn}

\author{Zhuo Chen*}
\address{Zhuo Chen, Department of Mathematics, Tsinghua University, Beijing, 100084, P.R. China}
\email{chenzhuo@mail.tsinghua.edu.cn}

\author{Yu Qiao}
\address{Yu Qiao, School of Mathematics and Statistics, Shaanxi Normal University, Xi'an, 710119, P.R. China}
\email{yqiao@snnu.edu.cn}

\author{Maosong Xiang}
\address{Maosong Xiang, School of Mathematics and Statistics, Huazhong University of Science and Technology, Wuhan, 430074, P.R. China}
\email{msxiang@hust.edu.cn}

\begin{abstract}
Consider a Lie subalgebra $\frakl \subset \frakg$ and an $\frakl$-invariant open submanifold $V \subset \frakl^{\ast}$. We demonstrate that any smooth dynamical twist on $V$, valued in $U(\frakg) \otimes U(\frakg)\llbracket \hbar \rrbracket$, establishes a twistor on the associated quantum groupoid when combined with the Gutt star product on the cotangent bundle $T^\ast L$ of a Lie group $L$ that integrates $\frakl$. This result provides a framework for constructing equivariant star products from smooth dynamical twists on those Poisson homogeneous spaces  arising from nondegenerate polarized Lie algebras, leveraging the structure of twistors of quantum groupoids.
\end{abstract}

\maketitle

\vspace{-0.4cm}
\begin{small}
\hspace*{0.72cm}
\textit{Key words}:
Hopf algebroid, quantum groupoid, dynamical twist,\\
\hspace*{3cm}
quantum dynamical Yang-Baxter equation. \\
\hspace*{1.1cm}
\textit{AMS subject classification (2020)}:~~~
17B37 81R50 53D55 53D17
\end{small}

\tableofcontents

\section{Introduction}
One of Drinfeld's significant result asserts that solutions to the quantum Yang-Baxter equation give rise to quantum groups \cites{Drinfeld-QuantumGroup, Drinfeld-HopfYangBaxter}. Similarly, solutions to the quantum \textit{dynamical} Yang-Baxter equation (QDYBE), also known as the Gervais-Neveu-Felder equation \cites{Gervais-Neveu, Felder}, lead to the formation of \textit{dynamical} quantum groups \cites{Etingof-Varchenko-Exchange, Etingof-Varchenko}. We recall a useful method for finding solutions to the QDYBE established in \cite{Babelon-Bernard-Billey}. Let $\frakg$ be a finite-dimensional Lie algebra, and $\eta$ an \textit{abelian} subalgebra of $\frakg$. 
Consider a smooth function $F \in C^{\infty}(V)\llbracket \hbar \rrbracket \otimes U(\frakg) \otimes U(\frakg)$ defined on an open submanifold $V$ of $\eta^{\ast}$, and valued in $U(\frakg) \otimes U(\frakg)\llbracket \hbar \rrbracket$, where $\hbar$ is a formal variable. Assume further that $F$ is of the form $F = 1 \otimes 1 \otimes 1 + O(\hbar)$ and is $\eta$-invariant. The function $\mathsf{R} := F_{21}^{-1} \cdot F$ is a solution to the associated QDYBE over the \textit{abelian} base $\eta^{\ast}$ if and only if $F$ satisfies a shifted cocycle condition. This function $F$ is referred to as a smooth \textbf{dynamical twist}
\cites{Babelon-Bernard-Billey, Etingof-Nikshych-root, Etingof-Varchenko-Exchange}.

 The formation of an \textit{elliptic} quantum group, as introduced by Felder~\cites{Felder-Elliptic, Felder}, originates from a smooth dynamical twist $F$. This quantum group is characterized as a family of quasi-Hopf algebras, denoted by $H_\lambda := (U(\frakg)\llbracket \hbar \rrbracket, \Delta_{\lambda})$, which is parameterized by $\lambda \in \eta^{\ast}$. The coproduct $\Delta_{\lambda}$ is defined as $F^{-1}(\lambda)\Delta F(\lambda)$, resulting in a family of coproducts on $U(\frakg)\llbracket \hbar \rrbracket$ that are \textit{not necessarily coassociative}. An important advancement in this area is presented in~\cite{Xu-R-matrices}, where the algebra $\cH = \cD \otimes U(\frakg)\llbracket \hbar \rrbracket$ is enriched by the algebra $\cD$, the algebra of smooth differential operators on $V$. This enrichment allows $\cH$ to possess a \textbf{quantum groupoid} structure, as defined in~\cite{Xu-QuantumCNRS}. Quantum groupoids, a special class of Hopf algebroids, serve as a unification of quantum groups and star products~\cite{Xu-Quantum}. In this context, the role of $F$ is supplanted by $\cF := F \cdot \Theta_0$, where $\Theta_0$ is expressed as $\exp(\sum_i \hbar \frac{\partial}{\partial \lambda_i} \otimes l_i)$. This $\cF$, referred to as a \textbf{twistor}~\cite{Xu-R-matrices} of $\cH$, transforms $\cH$ into a new quantum groupoid $\cH_{\cF}$, known as the \textbf{dynamical quantum groupoid}. Notably, the coproduct in $\cH_{\cF}$ is \textit{coassociative} and extends the coproduct $\Delta_{\lambda}$ of $H_{\lambda}$ in a natural manner. Another construction of Hopf algebra alike objects associated with solutions of the QDYBE,
 known as the \emph{dynamical quantum groups}, was developed in
 \cites{Etingof-Varchenko, Etingof-Varchenko-Exchange, Etingof-Nikshych-root, Felder-Elliptic}.

These achievements   have been predicated on the assumption that \(\eta \subset \frakg\) is an abelian subalgebra. Therefore, it is reasonable to anticipate the existence of a universal or generalized construction applicable to \textit{non-abelian} subalgebras. This note  provides a comprehensive account of a construction of dynamical quantum groupoids for a non-abelian Lie subalgebra \(\frakl \subset \frakg\).  Within this context, let $V$ be an $\frakl$-invariant open submanifold of $\frakl^{\ast}$. An $\frakl$-invariant element $F$ in $C^{\infty}(V) \llbracket \hbar \rrbracket \otimes U(\frakg) \otimes U(\frakg)$ is termed a \textbf{smooth dynamical twist} if it satisfies specific shifted cocycle conditions, as detailed in the literature~\cites{Xu-Nonabelian, Enriquez-Etingof1, Enriquez-Etingof2}.
The significance of such a smooth dynamical twist lies in its ability to generate a solution $\mathsf{R}:=F_{21}^{-1}\ast_{\PBW} F$ to the  QDYBE. Here the operation $\ast_{\PBW}$ denotes the star product derived from the Poincare-Birkhoff-Witt (PBW) isomorphism.

Classical limits of smooth dynamical twists
are \textit{triangular classical dynamical $r$-matrices}  which satisfy the
classical \textit{dynamical} Yang-Baxter equation (CDYBE) \cite{Xu-Nonabelian}.
A more general shifted-cocycle equation for dynamical twists with an \textbf{associator} term is proposed in
\cite{Enriquez-Etingof1}, we refer to \cites{Enriquez-Etingof-Marshall} on further developments.
The quantization of the \textbf{Alekseev-Meinrenken dynamical $r$-matrices} \cite{Alekseev-Meinrenken},
which are non-triangular and are over the nonabelian base
$\frakg^{\ast}$, is constructed in such setting \cite{Enriquez-Etingof1}.
The classical limits of dynamical twists in \cite{Enriquez-Etingof1}
are solutions of the \textit{modified} CDYBE \cite{LX}.
In this note, we only consider the \textit{trivial} associator case.
For discussions of Hopf algebroids and quantum groupoids in other fields like invariants of knots and $3$-manifold invariants, dg geometry, and shifted Poisson geometry, we refer the reader to \cites{Chen-Xiang-Xu, Kalmykov-Safronov, Nikshych-Turaev-Vainerman}.

Consider a Lie group $L$ that integrates the Lie algebra $\mathfrak{l}$. Gutt~\cite{Gutt} developed a specific star product, now referred to as the \textit{Gutt star product}, on the cotangent bundle $T^{\ast}L \cong \mathfrak{l}^{\ast} \times L$ of $L$.
A normal ordering version of this star product \cite{Xu-Nonabelian}, when
regarded as a bi-differential operator on $V\times L \subset \frakl^{\ast}\times L$, can be represented by an element
 \[
 \Theta_{\mathrm{Gutt}} \in \left( \mathcal{D} \otimes U(\mathfrak{l}) \llbracket \hbar \rrbracket \right) \otimes_{C^{\infty}(V)\llbracket \hbar \rrbracket} \left( \mathcal{D} \otimes U(\mathfrak{l}) \llbracket \hbar \rrbracket \right),
 \]
 which we designate as a \textbf{Gutt twistor}.
 (Here $\cD\otimes U(\frakl)$ is regarded as the space of left $\frakl$-invariant differential operators on
 $V\times L$.)

Here is the main result of this note.
\begin{Thm}[=Theorem~\ref{MAIN}]\label{Thm A}
An $\frakl$-invariant element $F \in C^{\infty}(V)\llbracket \hbar \rrbracket \otimes U(\frakg)\otimes U(\frakg)$ is a smooth dynamical twist if and only if the product $\cF:=\overline{F\ast_{\PBW}}\cdot \Theta_{\mathrm{Gutt}}$ of an element $\overline{F\ast_{\PBW}}$~\eqref{Fstaroverline} and the Gutt twistor $\Theta_{\mathrm{Gutt}}$ is a twistor of the quantum groupoid $\cH$.	
\end{Thm}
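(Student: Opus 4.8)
The plan is to verify directly the two defining conditions of a twistor of the quantum groupoid $\cH$ — invertibility together with the counit normalization, and the shifted $2$-cocycle (pentagon-type) identity — for the candidate $\cF = \overline{F\ast_{\PBW}}\cdot\Theta_{\mathrm{Gutt}}$, and to match them, term by term and in both directions, with the two defining conditions of a smooth dynamical twist. First I would fix notation by writing out both sets of conditions explicitly: on the one hand the twistor cocycle $(\Delta\otimes\id)(\cF)\cdot(\cF\otimes 1)=(\id\otimes\Delta)(\cF)\cdot(1\otimes\cF)$ together with $(\epsilon\otimes\id)(\cF)=1=(\id\otimes\epsilon)(\cF)$, read inside $\cH\otimes_{C^{\infty}(V)\llbracket\hbar\rrbracket}\cH\otimes_{C^{\infty}(V)\llbracket\hbar\rrbracket}\cH$; on the other the shifted cocycle condition for $F$ with its $\frakl^{\ast}$-shift and the normalization $F=1\otimes1\otimes1+O(\hbar)$.

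The decisive structural input is that $\Theta_{\mathrm{Gutt}}$ is \emph{itself} a twistor of $\cH$: this is precisely the associativity and unitality of the Gutt star product on $T^{\ast}L\cong\frakl^{\ast}\times L$, repackaged as the cocycle and counit identities for $\Theta_{\mathrm{Gutt}}$. Granting this, I would invoke the general composition principle for twistors: if $\Theta$ is a twistor of $\cH$, producing the twisted quantum groupoid $\cH_{\Theta}$ whose coproduct is the conjugate of $\Delta$ by $\Theta$, then a product $G\cdot\Theta$ is a twistor of $\cH$ if and only if $G$ is a twistor of $\cH_{\Theta}$. This reduces Theorem~\ref{MAIN} to the assertion that $\overline{F\ast_{\PBW}}$ is a twistor of the Gutt-twisted groupoid $\cH_{\Theta_{\mathrm{Gutt}}}$, so that the whole statement becomes the identification of the $\cH_{\Theta_{\mathrm{Gutt}}}$-cocycle condition with the dynamical shifted-cocycle condition.

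The heart of the argument is then the computation of the twisted coproduct on the subalgebra generated by $C^{\infty}(V)\llbracket\hbar\rrbracket$ and $U(\frakl)$, together with its action on $\overline{F\ast_{\PBW}}$. The key point is that conjugation by $\Theta_{\mathrm{Gutt}}$ — that is, composition with the Gutt product — implements exactly the dynamical shift $\lambda\mapsto\lambda-\hbar h$ of the $\frakl^{\ast}$-variable by the (co)adjoint $\frakl$-action, in a form valid for non-abelian $\frakl$; in the abelian special case $\Theta_{\mathrm{Gutt}}$ degenerates to $\Theta_0=\sum_i\hbar\frac{\partial}{\partial\lambda_i}\otimes l_i$ and recovers the classical shift. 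I would use the $\frakl$-invariance of $F$ to guarantee that $\overline{F\ast_{\PBW}}$ is well defined over the base $C^{\infty}(V)\llbracket\hbar\rrbracket$ and commutes appropriately with the relevant factors, and the normalization $F=1+O(\hbar)$ to obtain invertibility of $\cF$ over the formal parameter and the counit identities. Tracking both implications at once — reading the cocycle identity forward yields the twistor property from the dynamical twist, while reading it backward, using the invertibility of $\Theta_{\mathrm{Gutt}}$, recovers the shifted cocycle condition for $F$ — delivers the ``if and only if''.

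The main obstacle I anticipate is the non-abelian shift computation in this last step: verifying that the Gutt-twisted coproduct reproduces the correct dynamical shift requires an explicit handle on the Gutt star product, together with the PBW-based reinterpretation $\overline{(\,\cdot\,)\ast_{\PBW}}$ that converts the $U(\frakl)$-components of $F$ into differential operators on $V$, all while keeping the nested, base-linear tensor factors of $\cH^{\otimes 3}$ correctly bookkept through the noncommuting shifts. A secondary technical point, used repeatedly, is checking that every identity remains well-posed over the (noncommutative) base after tensoring, which is again where $\frakl$-invariance intervenes.
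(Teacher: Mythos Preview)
Your strategy is sound in spirit and would ultimately reach the same conclusion, but it is organized differently from the paper's proof, and the ``general composition principle for twistors'' you invoke is not a free move in the Hopf algebroid setting. The paper does \emph{not} pass through the twisted groupoid $\cH_{\Theta_{\mathrm{Gutt}}}$: it expands both sides of the twistor cocycle identity for $\cF = \overline{F\ast_{\PBW}}\cdot\Theta_{\mathrm{Gutt}}$ directly, and the entire weight rests on two explicit commutation lemmas. The first (Lemma~\ref{Lemma2}) moves $(\Delta\otimes_R\id)\Theta_{\mathrm{Gutt}}$ past $(\overline{F\ast_{\PBW}})_{12}$, producing the shifted term $(\overline{\overline{F\ast_{\PBW}}})_{12}^{(3)}$; this holds unconditionally and is the precise incarnation of your ``conjugation by $\Theta_{\mathrm{Gutt}}$ implements the dynamical shift''. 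The second (Lemma~\ref{Lemma4}, via Lemma~\ref{Exchange}) moves $(\id\otimes_R\Delta)\Theta_{\mathrm{Gutt}}$ past $(\overline{F\ast_{\PBW}})_{23}$, and \emph{this} is where the $\frakl$-invariance of $F$ is consumed --- not primarily for well-definedness over the base, as you suggest, but to guarantee that $\Delta(g\ast_{\mathrm{Gutt}})$ commutes with $\overline{F\ast_{\PBW}}$ for polynomial $g$ on $\frakl^\ast$. Once both commutations are in hand, each side of the $\cF$-cocycle factors as an $F$-expression times the same Gutt-cocycle expression, and the equivalence with the dynamical twist equation~\eqref{Nonabelian-Twist2} is immediate.

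Your composition principle, by contrast, would ask that $\overline{F\ast_{\PBW}}$ be a twistor of $\cH_{\Theta_{\mathrm{Gutt}}}$; but a twistor of $\cH_{\Theta_{\mathrm{Gutt}}}$ lives in $\cH\otimes_{R_{\Theta_{\mathrm{Gutt}}}}\cH$ (tensor over the PBW-deformed base), whereas $\overline{F\ast_{\PBW}}$ is defined in $\cH\otimes_R\cH$. Making this precise forces you to unravel $\Theta_{\mathrm{Gutt}}^\sharp$ and the twisted coproduct $\Delta_{\Theta_{\mathrm{Gutt}}}$, and you would rediscover exactly Lemmas~\ref{Lemma2} and~\ref{Lemma4} as the content of that unraveling. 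So your route is not wrong, but the abstract principle you lean on does not come for free and, once made rigorous, collapses to the paper's direct computation. One small correction: the counit identities~\eqref{counitEquation} for $\cF$ correspond to the counit conditions~\eqref{counit2} on $F$, which are part of the \emph{definition} of a dynamical twist and are logically separate from the normalization $F = 1 + O(\hbar)$; the latter yields invertibility of $\cF^\sharp$, but not the counit identities themselves.
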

Consequently, quantum groupoids $\cH_{\cF}$ twisted by $\cF$ can be viewed as dynamical quantum groupoids over the \textit{nonabelian} base $\frakl^*$. The diagram below illustrates the relationship between our  result and existing works:
\begin{equation*}
\begin{tikzcd}[row sep=1.5 em, column sep=2.3em]
\Bigg\{\!\!\!\!\!
\begin{aligned}
& \text{smooth dynamical twists} \qquad  \\
& F\in C^{\infty}(V)\llbracket \hbar \rrbracket \otimes U(\frakg)\otimes U(\frakg)
\end{aligned} \Bigg\}
\arrow[d, "\text{Alekseev-Calaque \cite{Alekseev-Calaque} }"]
\arrow[r, "\textbf{Theorem}~\ref{Thm A}"] &
\Bigg\{\text{twistors $\cF$ of $\cH=\cD\otimes U(\frakg)\llbracket \hbar \rrbracket$} \Bigg\} \arrow[d, leftrightarrow, "\text{Xu \cite{Xu-QuantumCNRS}}"] \\
\Bigg\{\!\!\!\!
\begin{aligned}
& \text{right $L$-invariant compatible} \quad\quad\\
& \text{star products on}~C^{\infty}(V\times G)\llbracket \hbar \rrbracket	
\end{aligned}	
\Bigg\}
\arrow[u, shift left=5ex, "\text{Xu \cite{Xu-Nonabelian}}"]
\arrow[r, hook]
& \Bigg\{ \text{star products on}~C^{\infty}(V\times G)\llbracket \hbar \rrbracket \Bigg\}\, .	
	\end{tikzcd}
\end{equation*}

In this diagram,  $G$ denotes a Lie group integrating $\frakg$ and $L\subset G$ is a Lie subgroup integrating the inclusion $\frakl \subset \frakg$.  
\begin{itemize}
\item The lower horizontal arrow is the natural inclusion; The upper horizontal arrow is illustrated by our main result.
\item The left bottom-to-top-arrow is due to~\cite{Xu-Nonabelian}*{Corollary 3.6}. 
\item The left top-to-bottom-arrow is pointed out in the proof of  ~\cite{Alekseev-Calaque}*{Proposition 1.3}. The left vertical correspondence is one-to-one. In fact, this correspondence has appeared in the setting $\eta \subset \mathfrak{g}$ being abelian \cite{Xu-R-matrices}.
\item The right vertical correspondence is a bijection proved in \cite{Xu-QuantumCNRS}. It is further shown in \cite{Xu-QuantumCNRS} that star products can be equivalently described in terms of twistors of quantum groupoids.
\end{itemize}

The diagram suggests that our  {Theorem ~\ref{Thm A}} can be interpreted as a natural ``inclusion" from smooth dynamical twists to twistors. Additionally, we note that the abstract construction of bialgebroid twists from dynamical twists, as discussed in~\cite{Donin-Mudrov-2}, could well be related to   our results.

 \medskip

This note is structured to guide the reader through a comprehensive exploration of the topic. In Section \ref{Sec:2}, we   revisit  fundamental concepts related to Hopf algebroids, twistors, quantum groupoids, and dynamical twists.   Section \ref{Sec:3} is dedicated to stating and proving  Theorem~\ref{Thm A}. Finally, in Section \ref{Sec:4}, we apply Theorem~\ref{Thm A} to the Enriquez-Etingof formal dynamical twists, which arise from nondegenerate polarized Lie algebras \cite{Enriquez-Etingof2}. This application offers a novel perspective on equivariant star products on the associated Poisson homogeneous spaces, approached from the standpoint of quantum groupoids.

 \smallskip

\textbf {Conventions and Notations}

\begin{itemize}
\item The base field $\fK$ is either $\fR$ or $\fC$. All vector spaces are over $\fK$.
\item The notation $\fK\llbracket \hbar \rrbracket$ stands for the algebra of formal power series over $\fK$.
\item A $\fK\llbracket \hbar \rrbracket$-module $M$ is called completed if $ M=\varprojlim_k M/ \hbar^{k} M$.
\item The symbol $\cdot\otimes_{R}\cdot$ means the tensor product over some associative $\fK$-algebra $R$.  When $R = \fK$, we simplify it as $\cdot\otimes\cdot$;
    When $R$ is a completed associative $\fK\llbracket \hbar \rrbracket$-algebra,  $\cdot \otimes_{R}\cdot$ means the completed tensor product.
\item When $V$ is a $\fK$-vector space and $M$ is a completed $\fK\llbracket \hbar \rrbracket$-module, we use $V \otimes M$ to denote the completed tensor product $\varprojlim_k V\otimes_{\fK} (M/\hbar^k M)$ (and $M\otimes V$ is similar).
\end{itemize}

\subsection*{Acknowledgements}

We would like to thank Zhangju Liu and Ping Xu
for fruitful discussions and useful comments.

\section{Preliminaries}\label{Sec:2}

\subsection{Hopf algebroids and twistors}\label{Section1}
We start with recalling from \cites{Xu-Quantum, Xu-QuantumCNRS} some basics of Hopf algebroids.
Let us fix an associative $\fK$-algebra $R$ with the unit $1$.
\begin{definition}\label{Def:Hopfalgebroid}
A Hopf algebroid over $R$ consists of a sextuple $(\cH\, , \alpha\, , \beta\, , \cdot\, , \Delta\, , \epsilon)$:
\begin{enumerate}
\item $(\cH, \cdot)$ is an associative $\fK$-algebra with unit $1$.
\item $\alpha\colon R \rightarrow \cH $ is an algebra homomorphism, called
the \textbf{source map} and $\beta\colon R \rightarrow \cH$ is an algebra anti-homomorphism, called the \textbf{target map}, such that their images commute, i.e., $\alpha(a)\beta(b)=\beta(b)\alpha(a)$, for all $a, b\in R$.
\item $\Delta\colon \cH \rightarrow \cH\otimes_{R} \cH$, called the \textbf{coproduct}, is an $(R\,, R)$-bimodule morphism, satisfying $\Delta(1)=1\otimes 1$ and
\begin{equation*}
(\Delta\otimes_{R} \id)\circ \Delta= (\id\otimes_{R} \Delta)\circ \Delta\colon \cH \rightarrow \cH\otimes_{R} \cH\otimes_{R} \cH\, .	
\end{equation*}
\item The product and coproduct are compatible in the sense that
\begin{equation}\label{Important1}
\Delta(x)\cdot( \beta(a)\otimes 1 -1\otimes \alpha(a) )=0\, , \,\,\,\,
\forall x\in \cH, a\in R\, , 	
\end{equation}
\begin{equation*}\mbox{and }~\quad
\Delta(x_1\cdot x_2)=\Delta(x_1)\cdot \Delta(x_2)
\, , \quad \forall x_1\, , x_2\in \cH \, .
\end{equation*}
\item $\epsilon\colon \cH \rightarrow R$, called the \textbf{counit},  is an $(R\,, R)$-bimodule map, satisfying $\epsilon(1)=1$ and
\begin{equation*}
(\epsilon\otimes_{R}\id)\circ	\Delta=(\id\otimes_{R}\epsilon)\circ \Delta=\id.\,
\end{equation*}
Here we have used the identification $R\otimes_{R} \cH \cong \cH\otimes_{R}R \cong \cH$.
\item  {$\Ker(\epsilon)$ is a left ideal of $\cH$.}	
\end{enumerate}
\end{definition}
\begin{example}
Let $A$ be a Lie $\fK$-algebroid over a smooth manifold $M$~\cite{Mackenzie-Book}.
The section space $\Gamma(A)$ of $A$ together with the algebra $C^{\infty}(M)$ of $\fK$-valued smooth functions constitutes a Lie-Rinehart algebra $(C^{\infty}(M)\, , \Gamma(A))$.
The universal enveloping algebra $U(A)$ of $A$ over $M$ is by definition the universal enveloping algebra of the Lie-Rinehart algebra $(C^{\infty}(M)\, , \Gamma(A))$~\cite{Rinehart}, which is indeed a Hopf algebroid over $R=C^{\infty}(M)$ (see \cite{Xu-Quantum}). Both the source and the target maps are the inclusion $C^{\infty}(M) \hookrightarrow U(A)$.
In particular, the universal enveloping algebra $U(\frakg)$ of a Lie $\fK$-algebra $\frakg$ is a Hopf algebroid over $R=\fK$.
\end{example}

Let us abbreviate  $(\cH\, , \alpha\, , \beta\, , \cdot\, , \Delta\, , \epsilon)$ as $\cH$. Here are several facts about this definition:
\begin{itemize}
\item  There is a natural $(R, R)$-bimodule structure on $\cH$ defined by
\[
a \cdot x := \alpha(a)\cdot x, \quad x \cdot b:=\beta(b)\cdot x,
\]
for all $a, b\in R$ and $x\in \cH$, which allows us to define $n$-th power $\cH^{\otimes_{R}^n}$ for all $n$.
\item For two left $\cH$-modules $M_1$ and $M_2$, using Equation \eqref{Important1} we have a left $\cH$-module structure on $M_1\otimes_{R}M_2$ by
\begin{equation*}
x \cdot (m_1\otimes_{R}m_2)=\Delta(x)\cdot (m_1\otimes m_2)\, ,
\end{equation*}
for all $x \in \cH, m_1\in M_1, m_2\in M_2$.
\item  We follow \cite{Lu} to add Condition (6) in our definition, which is however not required in~\cite{Xu-Quantum}. In fact, we need this condition to endow $R$ with a
left $\cH$-module structure $\rhd\colon \cH\times R \rightarrow R $ defined by
\begin{equation*}
x \rhd a := \epsilon(x\cdot \alpha(a) )= \epsilon(x\cdot \beta(a) )\, , 	
\end{equation*}
for all $x\in \cH$ and $a\in R$.
\end{itemize}

Given an element $\cF=\sum_i \cF_{1,i} \otimes_{R} \cF_{2,i} \in \cH\otimes_{R}\cH$,   consider
\begin{align}\label{NewSource}
\alpha_{\cF}\colon &R \rightarrow \cH, & \alpha_{\cF}(a)&=\sum_i \alpha(\cF_{1,i}\rhd a)\cdot \cF_{2,i}, \\
\label{NewTarget}
\beta_{\cF}\colon &R \rightarrow \cH, & \beta_{\cF}(a)&=\sum_i \beta(\cF_{2,i}\rhd a)\cdot \cF_{1,i}, \\
\label{NewProduct}
\ast_{\cF}\colon &R \times R \rightarrow R, & a \ast_{\cF} b &=\sum_i(\cF_{1,i}\rhd a) \cdot (\cF_{2,i}\rhd b), 	
\end{align}
for all $a, b \in R$.

\begin{theorem}[\cites{Xu-R-matrices, Xu-Quantum}] \label{Pre}
Suppose that $\cF\in \cH\otimes_{R}\cH$ satisfies
\begin{equation}\label{TwistorEquation}
(\Delta\otimes_{R} \id)\cF\cdot (\cF\otimes_{R} 1)= (\id\otimes_{R} \Delta)\cF \cdot (1\otimes_{R} \cF)	 \in H\otimes_{R}H\otimes_{R}H\, ,
\end{equation}
and
\begin{equation}\label{counitEquation}
(\epsilon\otimes_{R}\id)	\cF=(\id\otimes_{R}\epsilon)\cF=1\, .	
\end{equation}
Then we have 
\begin{enumerate}
\item $R_{\cF}:=(R\, , \ast_{\cF})$ is an associative $\fK$-algebra with unit $1$.
\item $\alpha_{\cF}\colon R_{\cF}\rightarrow \cH$ is an algebra homomorphism and $\beta_{\cF}\colon R_{\cF}\rightarrow \cH$ is an algebra anti-homomorphism, satisfying $\alpha_{\cF}(a)\cdot \beta_{\cF}(b) = \beta_{\cF}(b) \cdot \alpha_{\cF}(a)$ and
\begin{equation*}
\cF \cdot (\beta_{\cF}(a)\otimes 1 - 1 \otimes \alpha_{\cF}(a))=0, 	
\end{equation*}
for all $a, b \in R$.
\end{enumerate}
\end{theorem}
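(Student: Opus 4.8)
The plan is to view $R$ as a left $\cH$-module algebra for the action $\rhd$ and to deduce each claim by pairing the definitions \eqref{NewSource}--\eqref{NewProduct} against the twistor and counit equations \eqref{TwistorEquation}--\eqref{counitEquation}. First I would record the elementary consequences of Definition~\ref{Def:Hopfalgebroid} used repeatedly: $\epsilon\circ\alpha=\epsilon\circ\beta=\id_{R}$, the bimodule rules $\epsilon(\alpha(r)x)=r\,\epsilon(x)$ and $\epsilon(\beta(r)x)=\epsilon(x)\,r$, the identities $\Delta(\alpha(r))=\alpha(r)\otimes_{R}1$ and $\Delta(\beta(r))=1\otimes_{R}\beta(r)$ (both immediate from $\Delta(1)=1\otimes_{R}1$ and bimodularity of $\Delta$), and $\alpha(r)\rhd a=ra$, with $\rhd$ the module action supplied by Condition~(6). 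From these I would derive the two \emph{absorption identities}
\[
\textstyle\sum\alpha(y_{(1)}\rhd c)\,y_{(2)}=y\,\alpha(c),\qquad\sum\beta(y_{(2)}\rhd c)\,y_{(1)}=y\,\beta(c),
\]
each obtained by applying a counit axiom to $\Delta(y\,\alpha(c))=\Delta(y)\,(\alpha(c)\otimes_{R}1)$, respectively $\Delta(y\,\beta(c))=\Delta(y)\,(1\otimes_{R}\beta(c))$. The first of them, together with $\epsilon(\alpha(r)x)=r\,\epsilon(x)$, yields the module-algebra rule $x\rhd(ab)=\sum(x_{(1)}\rhd a)(x_{(2)}\rhd b)$ and $x\rhd 1=\epsilon(x)$. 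Finally I would verify, once and for all, that the multilinear maps $\cH^{\otimes_{R}n}\to\Hom_{\fK}(R^{\otimes n},R)$ sending $x_{1}\otimes_{R}\cdots\otimes_{R}x_{n}$ to $(a_{1},\dots,a_{n})\mapsto(x_{1}\rhd a_{1})\cdots(x_{n}\rhd a_{n})$ are well defined over $\otimes_{R}$; this reduces exactly to $\alpha(r)\rhd a=ra$ and $\epsilon(\beta(r)x)=\epsilon(x)\,r$, and it is the one technical lemma underlying every later step.

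With this machinery, Part~(1) is short. The unit axiom reads $1\ast_{\cF}a=\sum_i\epsilon(\cF_{1,i})\,(\cF_{2,i}\rhd a)=\bigl(\sum_i\alpha(\epsilon(\cF_{1,i}))\,\cF_{2,i}\bigr)\rhd a=1_{\cH}\rhd a=a$, where the second equality uses $\alpha(r)\rhd(-)=r\,(-)$ with the module axiom and $\sum_i\alpha(\epsilon(\cF_{1,i}))\,\cF_{2,i}=(\epsilon\otimes_{R}\id)\cF=1$ by \eqref{counitEquation}; the identity $a\ast_{\cF}1=a$ is symmetric. For associativity I would rewrite, using the module-algebra rule and coassociativity of $\Delta$, the triple product $(a\ast_{\cF}b)\ast_{\cF}c$ as the image of $a\otimes b\otimes c$ under the pairing of $(\Delta\otimes_{R}\id)\cF\cdot(\cF\otimes_{R}1)$, and $a\ast_{\cF}(b\ast_{\cF}c)$ as the pairing of $(\id\otimes_{R}\Delta)\cF\cdot(1\otimes_{R}\cF)$ against the same argument. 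The twistor equation \eqref{TwistorEquation} asserts that these two elements of $\cH^{\otimes_{R}3}$ coincide, so the two products agree and $\ast_{\cF}$ is associative.

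For Part~(2) the homomorphism statements are where the twistor equation does its real work. To prove $\alpha_{\cF}(a\ast_{\cF}b)=\alpha_{\cF}(a)\,\alpha_{\cF}(b)$ I would expand the left-hand side with the module-algebra rule so that the element $(\Delta\otimes_{R}\id)\cF\cdot(\cF\otimes_{R}1)$ appears, replace it by $(\id\otimes_{R}\Delta)\cF\cdot(1\otimes_{R}\cF)$ using \eqref{TwistorEquation}, and then collapse the resulting internal coproduct through the $\alpha$-absorption identity; what is left is exactly $\alpha_{\cF}(a)\,\alpha_{\cF}(b)$. That $\beta_{\cF}$ is an anti-homomorphism is the mirror computation: it runs \eqref{TwistorEquation} in the other direction, uses the $\beta$-absorption identity, and the anti-multiplicativity of $\beta$ is exactly what reverses the order to $\beta_{\cF}(b)\,\beta_{\cF}(a)$. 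Both $\alpha_{\cF}(1)=\beta_{\cF}(1)=1$ are again \eqref{counitEquation}.

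It remains to show that the images of $\alpha_{\cF}$ and $\beta_{\cF}$ commute and that $\cF\cdot(\beta_{\cF}(a)\otimes_{R}1-1\otimes_{R}\alpha_{\cF}(a))=0$. Here I would feed the original compatibility \eqref{Important1}---in the form $\sum x_{(1)}\beta(a)\otimes_{R}x_{(2)}=\sum x_{(1)}\otimes_{R}x_{(2)}\alpha(a)$ applied to the legs of $\cF$---into \eqref{TwistorEquation}, combine it with the absorption identities and with the commutation $\alpha(r)\beta(s)=\beta(s)\alpha(r)$ already present in $\cH$, and read off both assertions after the partial counit contraction $\id\otimes_{R}\epsilon\otimes_{R}\id$. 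I expect the main obstacle throughout to be precisely this $\otimes_{R}$-bookkeeping: tracking whether a contracted counit reappears on the source or the target side, and confirming that every pairing is independent of the chosen balanced-tensor representative. Once the well-definedness lemma and the two absorption identities are in place, each assertion of the theorem reduces to a short and essentially forced manipulation of \eqref{TwistorEquation}, \eqref{Important1} and \eqref{counitEquation}.
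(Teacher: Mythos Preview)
The paper does not actually prove this theorem: it is stated with citations to \cite{Xu-R-matrices} and \cite{Xu-Quantum} and then used as a black box, so there is no ``paper's own proof'' to compare against. Your plan is the standard direct verification one finds in those references, and the overall architecture is correct: set up the absorption identities $\sum\alpha(y_{(1)}\rhd c)\,y_{(2)}=y\,\alpha(c)$ and its $\beta$-mirror, deduce the module-algebra rule $x\rhd(ab)=\sum(x_{(1)}\rhd a)(x_{(2)}\rhd b)$, check that the evaluation maps $\cH^{\otimes_R n}\to\Hom_\fK(R^{\otimes n},R)$ are well defined over the balanced tensor, and then read off associativity of $\ast_\cF$, the (anti-)homomorphism properties of $\alpha_\cF,\beta_\cF$, their commutation, and the compatibility $\cF\cdot(\beta_\cF(a)\otimes_R1-1\otimes_R\alpha_\cF(a))=0$ from \eqref{TwistorEquation}, \eqref{counitEquation} and \eqref{Important1}. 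One caution on the final two claims: the commutation $\alpha_\cF(a)\beta_\cF(b)=\beta_\cF(b)\alpha_\cF(a)$ does not follow simply from ``$\alpha(r)\beta(s)=\beta(s)\alpha(r)$ already present in $\cH$'' combined with a partial counit contraction of \eqref{TwistorEquation}; you will need to use \eqref{Important1} applied to the \emph{first} leg of $\cF$ together with both absorption identities in tandem, and the argument is a bit longer than the one-line sketch suggests. Otherwise the proposal is sound and, once written out, would constitute a complete proof.
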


As a consequence, such an element $\cF$ determines a linear map of $\fK$-vector spaces
\begin{align}\label{Eqt:Fsharp}
\cF^{\sharp}\colon M_1\otimes_{R_{\cF}}M_2 & \rightarrow M_1 \otimes_{R} M_2 \notag \\
\cF^{\sharp}(m_1 \otimes_{R_{\cF}} m_2 ) &:= \sum_i (\cF_{1,i}\cdot m_1) \otimes_{R} (\cF_{2,i}\cdot m_2),
\end{align}
for any two left $\cH$-modules $M_1$ and $M_2$.

\begin{definition}[\cites{Xu-R-matrices, Xu-Quantum}]
The element  $\cF\in \cH\otimes_{R}\cH$ is called a \textbf{twistor} of the Hopf algebroid $\cH$, if it satisfies Equations \eqref{TwistorEquation} and \eqref{counitEquation}, and the linear map $\cF^{\sharp}$ in \eqref{Eqt:Fsharp}  is \textbf{invertible}.
\end{definition}

\subsection{Quantum groupoids and star products}
Given a smooth manifold $M$, let $R$ be a completed associative $\fK\llbracket \hbar \rrbracket$-algebra with unit such that $R/\hbar R = C^{\infty}(M)$.

\begin{definition}[\cites{Xu-Quantum, Xu-QuantumCNRS}]
A quantum universal enveloping algebroid (or QUE algebroid), also called a quantum groupoid, is a topological Hopf algebroid $(\cH\, , \alpha\, , \beta\, , \cdot_{\hbar}\, , \Delta\, , \epsilon)$ over $R$
(i.e., $\cH$ is a completed $\fK\llbracket \hbar \rrbracket$-module, all maps $\alpha$, $\beta$, $\cdot$, $\Delta$, $\epsilon$ are continuous with respect to the $\hbar$-adic topology), such that
the induced Hopf algebroid $\cH/\hbar \cH$ over $C^{\infty}(M)$ is isomorphic to the standard Hopf algebroid $U(A)$ of a Lie algebroid $A$ over $M$.
\end{definition}

\begin{example}\label{Ex:cF-star}
The Hopf algebroid structure on $U(TM)$ over $C^{\infty}(M)$ extends naturally to a quantum groupoid structure on $\cH :=U(TM)\llbracket \hbar \rrbracket$ over $R := C^{\infty}(M)\llbracket \hbar \rrbracket$ .
Consider a formal power series in $\hbar$
\begin{equation*}
\cF=1\otimes_{R}1+ \sum_{k=1} \hbar^{k} B_{k} \in U(TM)\otimes_{C^{\infty}(M)}U(TM)\llbracket \hbar \rrbracket = U(TM)\llbracket \hbar \rrbracket \otimes_{C^{\infty}(M)\llbracket \hbar \rrbracket} U(TM)\llbracket \hbar \rrbracket,
\end{equation*}
where the coefficients $B_k$ are bi-differential operators on $M$.
Then $\cF$ is a twistor of $\cH$ if and only if the product $\ast_{\cF}$ defined by
\begin{equation*}
f \ast_{\cF} g:=\cF(f\, , g),
\end{equation*}
for all $f, g\in C^{\infty}(M)\llbracket \hbar \rrbracket$, is a \textbf{star product} on $M$.

The twistor $\cF$ determines a new quantum groupoid $\cH_{\cF}=(\cH, \alpha_{\cF}, \beta_{\cF}, \cdot, \Delta_{\cF}, \epsilon)$, where the source and target maps $\alpha_{\cF}, \beta_{\cF} \colon R_{\cF}\rightarrow \cH$ are given by the left and right multiplication of the star product $\ast_{\cF}$, respectively, i.e.,
\begin{align*}
\alpha_{\cF}(f)(g) &= f\ast_{\cF}g, &  \beta_{\cF}(f)(g) &= g\ast_{\cF}f,
\end{align*}
for all $f, g \in C^{\infty}(M)$. The coproduct $\Delta_{\cF}$ is given by
\begin{equation*}
\Delta_{\cF}(x):=(\cF^{\sharp})^{-1}( \Delta(x)\cdot \cF ),	
\end{equation*}
for all $x \in \cH$.
\end{example}

\medskip

Let $(\frakl,[-,-]_{\frakl})$ be an $N$-dimensional $\fK$-Lie algebra.
Denote by $\frakl_{\hbar}:=\frakl\llbracket \hbar \rrbracket =\frakl\otimes \fK\llbracket \hbar \rrbracket$ the Lie algebra whose Lie bracket is $\fK\llbracket \hbar \rrbracket$-linear and satisfies $[X\, , Y]_{\frakl_{\hbar}} = \hbar [X\, , Y]_{\frakl}$,  for all $X\, , Y\in \frakl$.
Consider the universal enveloping algebra $U(\frakl_{\hbar})$ of $\frakl_{\hbar}$ over $\fK\llbracket \hbar \rrbracket$. Note that
\[
U(\frakl_{h})/ \hbar U(\frakl_{h}) \cong S(\frakl) \cong \mathrm{Pol}(\frakl^{\ast}),
\]
where $\mathrm{Pol}(\frakl^{\ast})$ denotes the space of polynomial functions on $\frakl^{\ast}$. Thus, one can regard $U(\frakl_{\hbar})$ as a deformation quantization of $\mathrm{Pol}(\frakl^{\ast})$.

Choose a basis $\{l_{1},\cdots,l_{N}\}$ of $\frakl$.  Denote by $\{\lambda_{1},\cdots, \lambda_N\}$  the corresponding coordinate functions on $\frakl^{\ast}$. Then the PBW map is of the form
\begin{align}\label{PBWmap}
\mathrm{Pol}(\frakl^{\ast})\llbracket \hbar\rrbracket & \rightarrow U(\frakl_{\hbar}) , \notag \\
\lambda_{i_1}\cdots \lambda_{i_k} & \mapsto
\frac{1}{k!}\sum_{\sigma\in \mathrm{S}_k}
  l_{i_{\sigma(1)}} \cdots   l_{i_{\sigma(k)}},
\end{align}
where $\mathrm{S}_k$ is the group of permutations of $k$ elements. It is indeed a vector space isomorphism. Via the PBW map, the associative multiplication in $U_{\hbar}(\frakl)$ pulls back to an associative (but not commutative) multiplication $\ast_{\PBW}$ on $\mathrm{Pol}(\frakl^{\ast})\llbracket \hbar \rrbracket$, which is extended naturally to an associative multiplication, still denoted by $\ast_{\PBW}$, on $C^{\infty}(\frakl^{\ast})\llbracket \hbar \rrbracket$, called the \textbf{PBW star product}.
In terms of powers of $\hbar$, we write
\begin{equation*}
f\ast_{\PBW}g=fg+ \frac{1}{2}\hbar \{f\, , g\}_{\pi_{\frakl^{\ast}}}+
\sum_{k\geqslant 2}\hbar^{k} B_k (f\, , g), 	
\end{equation*}
for all $f, g\in C^{\infty}(\frakl^{\ast})$, where $\{- , -\}_{\pi_{\frakl^{\ast}}}$ is the Poisson bracket induced by the linear Poisson structure  $\pi_{\frakl^{\ast}}$ on $\frakl^{\ast}$,  and $B_{k}$'s are a family of bi-differential operators on $\frakl^{\ast}$ (see \cites{Dito, Kathotia, Gutt}).

Denote by $U(T\frakl^{\ast})$ the space of differential operators on $\frakl^{\ast}$.
Note that left multiplications of the PBW star product by smooth functions on $\frakl^{\ast}$ are elements in $U(T\frakl^{\ast})\llbracket \hbar \rrbracket$, satisfying
\begin{equation*}
(f\ast_{\PBW}) \cdot (g\ast_{\PBW}) = (f\ast_{\PBW}g)\ast_{\PBW}
\in U(T\frakl^{\ast})\llbracket \hbar \rrbracket,
\end{equation*}
for all $f, g \in C^{\infty}(\frakl^{\ast})$. Here $\cdot$ is the product in $U(T\frakl^{\ast})$.
By the construction of the PBW map \eqref{PBWmap}, we have
\begin{equation}\label{MonoidalPBW}
(\lambda_{i_1}\cdots \lambda_{i_k})\ast_{\PBW} = \frac{1}{k!}\sum_{\sigma}
  (\lambda_{i_{\sigma(1)}}\ast_{\PBW})\cdot (\lambda_{i_{\sigma(2)}}\ast_{\PBW}) \cdot  \,\, \cdots \,\,
\cdot(\lambda_{i_{\sigma(k)}}\ast_{\PBW}),
\end{equation}
for all monomials of the form $\lambda_{i_1}\cdots \lambda_{i_k}$.

Let $V \subset \frakl^{\ast}$ be an open submanifold which is invariant under the coadjoint action. The PBW star product reduces to an associative multiplication on $C^{\infty}(V)\llbracket \hbar \rrbracket$, still denoted by $\ast_{\PBW}$.
\bigskip

Let $\frakg$ be a Lie algebra such that $\frakl\subset \frakg$ is a Lie subalgebra.
Let $G$ be a Lie group integrating $\frakg$ and $L \subset G$ be a Lie subgroup integrating $\frakl \subset \frakg$.
For any $l \in \frakl \subset \frakg$, denote by $\overset{\rightarrow}{l}$ the left invariant vector field on $L$ or on $G$ generated by $l$.

\begin{definition}[\cite{Xu-Nonabelian}]
 A \textbf{compatible star product} on $V\times G$ is an associative product $\ast$ on $ C^{\infty}(V \times G)\llbracket \hbar \rrbracket \cong  C^{\infty}(V)\llbracket \hbar \rrbracket \otimes  C^{\infty}(G)\llbracket \hbar \rrbracket$, satisfying the following properties:
\begin{enumerate}
\item The constant function $1\otimes 1$ is a unit.
\item It extends the PBW star product on $C^{\infty}(V)\llbracket \hbar \rrbracket$, i.e.,
\begin{equation*}
g_1 \ast g_2= g_1\ast_{\PBW} g_2\, ,
\end{equation*}
for all $g_1\, , g_2\in C^{\infty}(V)$.
\item  For all $f \in C^{\infty}(G)$ and $g \in C^{\infty}(V)$, we have
\begin{equation*}
f \ast g =f\cdot g,
\end{equation*}
 and
\begin{equation*}
g \ast f = \sum_{k=0}^{\infty} \sum_{i_1, \cdots, i_k\in \{1,\cdots,N\}} \frac{\hbar^{k}}{k!} \frac{\partial^{k} g}{\partial \lambda_{i_1} \partial \lambda_{i_2} \cdots \partial \lambda_{i_k}} \cdot \overset{\longrightarrow}{l_{i_1}} \overset{\longrightarrow}{l_{i_2}} \cdots \overset{\longrightarrow}{l_{i_k}}f \, .	
\end{equation*}
\item For all $f_1\, , f_2\in C^{\infty}(G)$, we have
\begin{equation*}
f_1 \ast f_2 = \overset{\xrightarrow{ }}{F }(f_1\, , f_2)\, ,
\end{equation*}
for some $F = 1\otimes 1\otimes 1 + O(\hbar) \in C^{\infty}(V)\llbracket \hbar \rrbracket\otimes U(\frakg)\otimes U(\frakg)$.
 \end{enumerate}
\end{definition}
As a result, a compatible star product $\ast$ is determined by an element
\begin{equation}\label{Eq: F}
F =\sum_{p=0}^{\infty}\hbar^p F_p = \sum_{p=0}^\infty \hbar^p \sum_{r,s} b_{p,rs}\otimes  U_{p,r}\otimes U_{p,s} \in C^{\infty}(V)\llbracket \hbar \rrbracket \otimes U(\frakg)\otimes U(\frakg),
\end{equation}
where $b_{p,rs} \in C^{\infty}(V)$ and $U_{p,r}, U_{p,s}\in U(\frakg)$, satisfying
$F_0 = \sum_{r,s} b_{0,rs}\otimes  U_{0,r}\otimes U_{0,s} = 1\otimes 1 \otimes 1$.

According to~\cite{Xu-Nonabelian}*{Corollary 3.6}, if   $\ast$ is a compatible star product
on $V\times G$ arising from $F$, then $F$ satisfies
\begin{equation}\label{counit2}
(\id\otimes \epsilon\otimes \id)F=(\id\otimes \id\otimes \epsilon)F=1\otimes 1\, ,	
\end{equation}
and
\begin{equation}\label{Nonabelian-Twist2}
(\id\otimes \Delta\otimes \id)F \ast_{\PBW} F_{12}^{(3)} =(\id\otimes \id \otimes \Delta)F  \ast_{\PBW} F_{23},
\end{equation}
where
\begin{align*}
F_{12}^{(3)} & := \sum_{p,r,s} \sum_{k=0}^{\infty}\sum_{i_1, \cdots, i_k}	
\frac{\hbar^{k+p}}{k!} \frac{\partial^k b_{p,rs}}{\partial \lambda_{i_1}\cdots \partial \lambda_{i_k}}
\otimes U_{p,r}\otimes U_{p,s} \otimes (l_{i_1}\cdots l_{i_k}) \\
& \qquad\qquad\qquad\qquad\qquad\qquad\qquad \in
C^{\infty}(V)\llbracket \hbar \rrbracket \otimes U(\frakg)\otimes U(\frakg)\otimes U(\frakl), \\
	F_{23} & := \sum_{p,r,s} \hbar^p b_{p,rs} \otimes 1 \otimes U_{p,r} \otimes U_{p,s}
	 \quad  \in C^{\infty}(V)\llbracket \hbar \rrbracket \otimes  U(\frakl) \otimes U(\frakg) \otimes U(\frakg).
\end{align*}

Note that there is a canonical right $L$-action on $V\times G$ defined by
\begin{equation*}
(\lambda\, , x) \cdot	 y:= (\mathrm{Ad}_{y^{-1}}^{\ast}\lambda\, , xy)\, ,
\end{equation*}
for all $(\lambda\, , x)\in V\times G$ and $y \in L$.
It is proved in \cite{Xu-Nonabelian} that a compatible star product $\ast$ determined by $F \in C^{\infty}(V)\llbracket \hbar \rrbracket\otimes U(\frakg)\otimes U(\frakg)$ is right $L$-invariant, i.e.,
\begin{equation*}
(f\ast g)\bigl( (\lambda\, , x)\cdot y\bigr) = f\bigl( (\lambda\, , x)\cdot y  \bigr) \ast
g\bigl( (\lambda\, , x)\cdot y  \bigr), 	\quad \forall (\lambda\, , x)\in V\times G , y\in L ,
\end{equation*}
if and only if $F$ is $\frakl$-invariant, i.e.,
\begin{equation*}
	 \ad^*_l \otimes 1\otimes 1 (F)  +[1\otimes l\otimes 1+ 1\otimes 1\otimes l\, , F ]=0\, , 	\quad
\forall  l \in \frakl.
\end{equation*}

An $F$ subject to the equations~\eqref{counit2} and~\eqref{Nonabelian-Twist2} is called a  \textbf{smooth dynamical twist} if it is also $\frakl$-invariant~\cites{Enriquez-Etingof2, Enriquez-Etingof1}. Thus, a right $L$-invariant  compatible star product on $V\times G$ determines a smooth dynamical twist. For the converse, see Theorem \ref{MAIN} and the explanation on its relations with some existing works in the next section.

\section{From dynamical twists to twistors of quantum groupoids}\label{Sec:3}
We start by introducing a quantum groupoid arising from a Lie algebra pair $(\frakg, \frakl)$.
Let $G$ be a Lie group integrating $\frakg$ and $L\subset G$ be a Lie subgroup integrating $\frakl \subset \frakg$.
For an $L$-invariant open submanifold $V \subset \frakl^{\ast}$, the product $V\times G$ admits a natural left $G$-action defined by
\begin{equation*}
g\cdot(\lambda\, , x)=(\lambda\, , gx)\, , 	
\end{equation*}
for all $g\in G$ and $(\lambda\, , x)\in V\times G$.
Consider the space of formal power series in $\hbar$ with coefficients being left $G$-invariant
differential operators on $V\times G$, i.e.,
\begin{equation*}
\cH:=\cD\otimes U(\frakg)\llbracket \hbar \rrbracket,
\end{equation*}
where $\cD$ is the algebra of smooth differential operators on $V$.
It is clear that $\cH$ admits a \textit{quantum groupoid} structure over $R:=C^{\infty}(V)\llbracket \hbar\rrbracket$, whose source and target maps,  product, coproduct, and counit are all naturally induced from the two Hopf algebroids $\cD$ and $U(\frakg)$.

\medskip

In~\cite{Gutt}, Gutt constructed an explicit star product on the cotangent bundle $T^\ast L \cong \frakl^{\ast}\times L$,
which realizes deformation quantization of the Poisson structure
\begin{equation*}
\pi=\pi_{\frakl^{\ast}}+\sum_i \frac{\partial}{\partial \lambda_{i}}	\wedge \overset{\rightarrow}{l_{i}}
\end{equation*}
coming from the canonical symplectic structure on the cotangent bundle. A normal ordering version of this star product, which we also call \textbf{Gutt star product}, is given in \cite{Xu-Nonabelian}
\begin{equation}\label{GuttStarProduct}
f \ast_{\mathrm{Gutt}} g := \sum_{k=0}^{\infty} \sum_{i_1, \cdots, i_k} \frac{\hbar^{k}}{k!}
\frac{\partial^k f}{\partial \lambda_{i_1} \cdots \partial \lambda_{i_k}} \ast_{\PBW}
\overset{\longrightarrow}{l_{i_1}} \overset{\longrightarrow}{l_{i_2}} \cdots \overset{\longrightarrow}{l_{i_k}}g\, ,
\end{equation}
for all $f, g\in C^{\infty}(\frakl^{\ast}\times L)\llbracket \hbar \rrbracket$.
Moreover, it is a compatible star product on $\frakl^{\ast}\times L$~\cite{Xu-Nonabelian}.
When restricted to the $L$-invariant open submanifold $V\subset \frakl^{\ast}$, this Gutt star product gives rise to a twistor (cf. Example \ref{Ex:cF-star}):
\begin{equation*}
\Theta_{\mathrm{Gutt}}\in \left( \cD\otimes U(\frakl) \llbracket \hbar \rrbracket
\right) \otimes_{C^{\infty}(V)\llbracket \hbar \rrbracket} \left( \cD\otimes U(\frakl)
\llbracket \hbar \rrbracket \right)
\end{equation*}
of the quantum groupoid $\cD\otimes U(\frakl) \llbracket \hbar \rrbracket $ over $C^{\infty}(V)\llbracket \hbar \rrbracket$.
We  call $\Theta_{\mathrm{Gutt}}$ the \textbf{Gutt twistor}.

It is clear that $\cD\otimes U(\frakl)\llbracket \hbar \rrbracket \subset \cH$ is closed under the product and coproduct. Thus, the Gutt twistor $\Theta_{\mathrm{Gutt}}\in \cH\otimes_{R}\cH $ is also a twistor of $\cH$.

\subsection{Main result}
For each $F\in C^\infty(V) \llbracket \hbar \rrbracket \otimes U(\frakg)\otimes U(\frakg)$ as in~\eqref{Eq: F}, the left multiplication of the PBW star product on $C^\infty(V)\llbracket \hbar \rrbracket $  determines  {an element}
\begin{equation*}
F\ast_{\PBW}:=\sum_{p,r,s} \hbar^p (b_{p,rs}\ast_{\PBW})\otimes U_{p,r}\otimes U_{p,s}
\in \cD \llbracket \hbar \rrbracket \otimes U(\frakg)\otimes U(\frakg). 	
\end{equation*}
Composing with the coproduct $\Delta$ on $\cD \llbracket \hbar  \rrbracket$, we obtain
\begin{align}\label{Fstaroverline}
\overline{F\ast_{\PBW}} &:= \sum_{p,r,s} \hbar^p \Delta(b_{p,rs}\ast_{\PBW}) \otimes (U_{p,r}\otimes U_{p,s}) \notag \\
&\in \left( \cD \llbracket \hbar \rrbracket \otimes_{R} \cD \llbracket \hbar  \rrbracket \right)
\otimes \left( U(\frakg)\otimes U(\frakg) \right) \cong \cH \otimes_{R} \cH.	
\end{align}

\begin{theorem}\label{MAIN}
Let $(\frakg, \frakl)$ be a Lie algebra pair, and $V$ be an $\frakl$-invariant open submanifold in
$\frakl^{\ast}$. Suppose that $F$ is an $\frakl$-invariant element in $C^{\infty}(V) \llbracket \hbar \rrbracket \otimes U(\frakg)\otimes U(\frakg)$.
The following statements are equivalent:
\begin{enumerate}
\item $F$ is a smooth dynamical twist.
\item The element
\begin{equation*}
\cF= \overline{F\ast_{\PBW}} \cdot \Theta_{\mathrm{Gutt}}\in \cH\otimes_{R} \cH
\end{equation*}
is a twistor of the quantum groupoid $\cH:=\cD\otimes U(\frakg)\llbracket \hbar \rrbracket$.
\end{enumerate}
\end{theorem}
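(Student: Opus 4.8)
The plan is to check, for the element $\cF=\overline{F\ast_{\PBW}}\cdot\Theta_{\mathrm{Gutt}}$, the three requirements in the definition of a twistor of $\cH$ — the cocycle equation \eqref{TwistorEquation}, the counit equation \eqref{counitEquation}, and invertibility of $\cF^{\sharp}$ as in \eqref{Eqt:Fsharp} — and to match each of them with one of the defining conditions of a smooth dynamical twist. The conceptual backbone is that twistors compose: since $\Theta_{\mathrm{Gutt}}$ is already a twistor of $\cH$ (as recorded just before the statement), it twists $\cH$ into a quantum groupoid $\cH_{\Theta_{\mathrm{Gutt}}}$ whose base algebra is $R_{\Theta_{\mathrm{Gutt}}}=(C^{\infty}(V)\llbracket\hbar\rrbracket,\ast_{\PBW})$. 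The key observation making this base explicit is that the Gutt star product \eqref{GuttStarProduct} reduces to $\ast_{\PBW}$ on functions pulled back from $V$: the left-invariant vector fields $\overset{\rightarrow}{l_i}$ annihilate such functions, so only the $k=0$ term survives. I would then either establish, or use directly by expansion, the composition principle that $\cF$ is a twistor of $\cH$ precisely when $\overline{F\ast_{\PBW}}$ is a twistor of $\cH_{\Theta_{\mathrm{Gutt}}}$. This reduces the theorem to translating the twistor axioms over $\cH_{\Theta_{\mathrm{Gutt}}}$ into equations for $F$.

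The counit condition and invertibility are the routine parts, and I would dispatch them first. Applying $(\epsilon\otimes_{R}\id)$ and $(\id\otimes_{R}\epsilon)$ to $\cF$ and using that $\Theta_{\mathrm{Gutt}}$ already satisfies \eqref{counitEquation}, together with the fact that the counit of $\cD$ sends a left-multiplication operator $b_{p,rs}\ast_{\PBW}$ to $b_{p,rs}$ and that $(\epsilon\otimes_R\id)\Delta=(\id\otimes_R\epsilon)\Delta=\id$, collapses \eqref{counitEquation} for $\cF$ to exactly the counit condition \eqref{counit2} for $F$, in which the $\epsilon$'s are the augmentations of the two $U(\frakg)$-legs. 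For invertibility, I would note $\Theta_{\mathrm{Gutt}}=1\otimes_{R}1+O(\hbar)$ and $F=1\otimes1\otimes1+O(\hbar)$, hence $\cF=1\otimes_{R}1+O(\hbar)$ and $\cF^{\sharp}=\id+O(\hbar)$, which is invertible by a geometric-series argument in the $\hbar$-adically complete modules.

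The heart of the proof is the equivalence of the cocycle equation \eqref{TwistorEquation} for $\cF$ with the shifted cocycle equation \eqref{Nonabelian-Twist2} for $F$. I would expand \eqref{TwistorEquation} using that $\Delta\otimes_{R}\id$ and $\id\otimes_{R}\Delta$ are algebra homomorphisms, so that each side factors into a contribution from $\overline{F\ast_{\PBW}}$ and one from $\Theta_{\mathrm{Gutt}}$; the pure-$\Theta_{\mathrm{Gutt}}$ contributions cancel by the cocycle equation for the Gutt twistor, leaving an identity in which the $\overline{F\ast_{\PBW}}$-factors must be commuted past the $\Theta_{\mathrm{Gutt}}$-factors. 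This commutation is the crucial mechanism: passing $\Theta_{\mathrm{Gutt}}$ through $F$ replaces the naive coproduct of the function-legs by the Gutt-twisted one, and \eqref{MonoidalPBW} is precisely the combinatorial identity that produces from $\Delta(b_{p,rs}\ast_{\PBW})$ the derivative terms
\[
\tfrac{\partial^{k}b_{p,rs}}{\partial\lambda_{i_1}\cdots\partial\lambda_{i_k}}\otimes U_{p,r}\otimes U_{p,s}\otimes(l_{i_1}\cdots l_{i_k})
\]
defining $F_{12}^{(3)}$, while the untouched slot reproduces $F_{23}$. Reading the resulting term-by-term identity in both directions yields the equivalence of the two statements.

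The step I expect to be the main obstacle is exactly this commutation/coproduct computation. The coproduct on the $\cD$-factor applied to $b_{p,rs}\ast_{\PBW}$ is a bidifferential operator of unbounded order, and conjugating it through $\Theta_{\mathrm{Gutt}}$ requires a careful bookkeeping of how arbitrarily high $\lambda$-derivatives couple to products $\overset{\rightarrow}{l_{i_1}}\cdots\overset{\rightarrow}{l_{i_k}}$ of left-invariant vector fields, including the correct $1/k!$ normalization and symmetrization. Verifying that this conjugation reproduces the precise combinatorial shape of $F_{12}^{(3)}$ is the technical core; the $\frakl$-invariance of $F$ is what guarantees that the whole computation descends consistently to the twisted base $R_{\Theta_{\mathrm{Gutt}}}$ and that the $U(\frakl)$-legs act coherently. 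Once this dictionary is in place, both implications of the theorem follow.
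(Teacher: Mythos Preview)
Your proposal is correct and follows essentially the same route as the paper: expand both sides of \eqref{TwistorEquation} for $\cF$, commute the $\Theta_{\mathrm{Gutt}}$-factors past the $\overline{F\ast_{\PBW}}$-factors to produce the shifted term $F_{12}^{(3)}$ and the unshifted $F_{23}$, then cancel the common Gutt twistor cocycle to reduce to \eqref{Nonabelian-Twist2}, with \eqref{counit2} matching \eqref{counitEquation} directly. The paper isolates the two commutation steps you flag as the main obstacle into separate lemmas (Lemmas~\ref{Lemma2} and~\ref{Lemma4}, the latter resting on the $\frakl$-invariance via Lemma~\ref{Exchange}), and it omits the invertibility check you supply via the geometric-series argument.
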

We postpone the proof in the next subsection.
Applying Theorem~\ref{MAIN} and \cite{Xu-Quantum}*{Theorem 4.14} to the quantum groupoid $\mathcal{H} = (\cH , \alpha, \beta, \cdot, \Delta, \epsilon)$ over $R$, we obtain the following:
\begin{corollary}\label{MAIN2}
Suppose that $F$ is a smooth dynamical twist. Then the sextuple $\mathcal{H}_{\cF}=(\cH, \alpha_{\cF}, \beta_{\cF}, \cdot, \Delta_{\cF}, \epsilon)$ is a quantum groupoid over $R_{\cF}=(R, \ast_{\cF})$, where the product $\ast_{\cF}$ is given by Equation \eqref{NewProduct},
the source and target maps $\alpha_{\cF}$ and $\beta_{\cF}$ are given by Equation \eqref{NewSource} and \eqref{NewTarget}, respectively, the product $\cdot$ is the usual associative multiplication
and the coproduct $\Delta_{\cF}$ is given by
\begin{equation*}
\Delta_{\cF}(x):=(\cF^{\sharp})^{-1}( \Delta(x)\cdot \cF ),	
\end{equation*}
for all $x \in \cH$.
The resulting quantum groupoid $\cH_{\cF}$ is called the dynamical quantum groupoid.
\end{corollary}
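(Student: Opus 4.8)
The plan is to derive Corollary~\ref{MAIN2} as a direct application of Theorem~\ref{MAIN} combined with the general construction of twisting a quantum groupoid by a twistor. Since $F$ is assumed to be a smooth dynamical twist, Theorem~\ref{MAIN} immediately yields that $\cF=\overline{F\ast_{\PBW}}\cdot\Theta_{\mathrm{Gutt}}$ is a twistor of $\cH$; in particular $\cF$ satisfies the cocycle equation~\eqref{TwistorEquation}, the counit equation~\eqref{counitEquation}, and $\cF^{\sharp}$ from~\eqref{Eqt:Fsharp} is invertible. It then remains to show that twisting $\cH$ by any such $\cF$ produces again a quantum groupoid with the stated data $(\cH,\alpha_{\cF},\beta_{\cF},\cdot,\Delta_{\cF},\epsilon)$ over $R_{\cF}=(R,\ast_{\cF})$. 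This is codified in \cite{Xu-Quantum}*{Theorem 4.14}, and my task is to indicate which axioms require the twistor hypotheses and where.

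First I would invoke the portion already supplied by Theorem~\ref{Pre}: the twisted base $R_{\cF}=(R,\ast_{\cF})$ is an associative unital $\fK$-algebra, $\alpha_{\cF}$ is an algebra homomorphism and $\beta_{\cF}$ an anti-homomorphism with commuting images, and the identity $\cF\cdot(\beta_{\cF}(a)\otimes 1-1\otimes\alpha_{\cF}(a))=0$ holds. This last relation is exactly what makes $\Delta_{\cF}(x):=(\cF^{\sharp})^{-1}(\Delta(x)\cdot\cF)$ well defined as a map into the twisted tensor product $\cH\otimes_{R_{\cF}}\cH$, using that $\cF^{\sharp}$ is invertible. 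I would then check that $\Delta_{\cF}$ is an $(R_{\cF},R_{\cF})$-bimodule map, that it is multiplicative, and that it satisfies the analogue of~\eqref{Important1} for $\alpha_{\cF},\beta_{\cF}$; these all follow formally from the corresponding properties of $\Delta$ together with the defining relations of $\alpha_{\cF},\beta_{\cF}$.

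The key step, and the main obstacle, is the coassociativity $(\Delta_{\cF}\otimes_{R_{\cF}}\id)\circ\Delta_{\cF}=(\id\otimes_{R_{\cF}}\Delta_{\cF})\circ\Delta_{\cF}$. This is precisely where the cocycle equation~\eqref{TwistorEquation} enters: expanding both iterated coproducts by moving $\Delta$ past the factors of $\cF$ via $\Delta_{\cF}(x)=(\cF^{\sharp})^{-1}(\Delta(x)\cdot\cF)$, the two sides are governed respectively by $(\Delta\otimes_{R}\id)\cF\cdot(\cF\otimes_{R}1)$ and $(\id\otimes_{R}\Delta)\cF\cdot(1\otimes_{R}\cF)$, which~\eqref{TwistorEquation} forces to coincide. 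The counit $\epsilon$ is left unchanged, so the counit axioms for $\Delta_{\cF}$ reduce to~\eqref{counitEquation} by the same bookkeeping, and $\Ker(\epsilon)$ remains a left ideal since neither $\epsilon$ nor the product $\cdot$ is altered; this establishes that $(\cH,\alpha_{\cF},\beta_{\cF},\cdot,\Delta_{\cF},\epsilon)$ is a Hopf algebroid over $R_{\cF}$.

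Finally I would upgrade this Hopf algebroid to a quantum groupoid over $R_{\cF}$. Here one uses $\cF=1\otimes_{R}1+O(\hbar)$, which holds because $F=1\otimes 1\otimes 1+O(\hbar)$ and $\Theta_{\mathrm{Gutt}}=1\otimes_{R}1+O(\hbar)$; consequently $\ast_{\cF}$, $\alpha_{\cF}$, $\beta_{\cF}$ and $\Delta_{\cF}$ all agree with the untwisted structure maps modulo $\hbar$, and $R_{\cF}/\hbar R_{\cF}=C^{\infty}(V)$. Hence $\cH_{\cF}/\hbar\cH_{\cF}\cong\cH/\hbar\cH$ as Hopf algebroids over $C^{\infty}(V)$, which is the standard Hopf algebroid $U(A)$ of the associated Lie algebroid $A$; combined with the $\hbar$-adic continuity of the twisted maps, inherited from that of $\Delta$, the multiplications, and $(\cF^{\sharp})^{-1}$, this verifies the defining condition of a quantum groupoid and completes the proof.
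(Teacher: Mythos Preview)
Your proposal is correct and follows essentially the same approach as the paper: the paper's proof is just the sentence ``Applying Theorem~\ref{MAIN} and \cite{Xu-Quantum}*{Theorem 4.14},'' and you do exactly this, additionally sketching the content of Xu's Theorem~4.14 (the general twisting-by-a-twistor construction) rather than merely citing it. The extra detail you provide about coassociativity, the counit, and the classical limit is all accurate and simply unpacks what the cited theorem asserts.
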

\medskip

We now explain relations between our Theorem \ref{MAIN} and some existing results:
\begin{enumerate}
\item When the Lie subalgebra $\frakl$ is abelian, the Gutt twistor $\Theta_{\mathrm{Gutt}}= \Theta_0 = \exp( \sum_i \hbar \frac{\partial}{\partial \lambda_i}\otimes l_i)$ and the PBW star product $\ast_{\PBW}$ coincide with the usual multiplication of functions. Applying Theorem \ref{MAIN} and Corollary \ref{MAIN2} to this case, we recover the construction of twistors and dynamical quantum groupoids from smooth dynamical twists in~\cite{Xu-R-matrices}.
\item  Note that the space of right $L$-invariant compatible star products on $C^{\infty}(V\times G)\llbracket \hbar \rrbracket$ is isomorphic to that of twistors of the form $\cF=\overline{F\ast_{\PBW}} \cdot \Theta_{\mathrm{Gutt}}$. As a consequence of Theorem \ref{MAIN}, we recover the one-to-one correspondence between smooth dynamical twists and right $L$-invariant compatible star products on $V \times G$~\cites{Alekseev-Calaque, Xu-Nonabelian}.

\item Suppose that $F= \sum_{p=0}^{\infty}\sum_{r,s} \hbar^p b_{p,rs}\otimes  U_{p,r}\otimes U_{p,s}\in C^{\infty}(V) \llbracket \hbar \rrbracket\otimes U(\frakg)\otimes U(\frakg) $ is a smooth dynamical twist.
    Then its classical limit $\theta = \sum_{r,s} b_{1,rs}\otimes  (U_{1,r}\otimes U_{1,s} - \otimes  U_{1,s}\otimes U_{1,r})$ is a triangular classical dynamical $r$-matrice which
    satisfies the classical dynamical Yang-Baxter equation (CDYBE)\cite{Xu-Nonabelian}.
    On the other hand, consider the associated twistor
    \[
    \cF=\overline{F\ast_{\PBW}} \cdot \Theta_{\mathrm{Gutt}} = \sum_i \cF_{1,i}\otimes_{R} \cF_{2,i}\in \cH\otimes_{R}\cH,
    \]
whose classical limit
\begin{equation*}
\lim_{\hbar\rightarrow 0}\frac{1}{\hbar}\sum_i (\cF_{1,i}\otimes_{R} \cF_{2,i} - \cF_{2,i}\otimes_{R} \cF_{1,i} ) = \pi_{\frakl^{\ast}}+\sum_i \frac{\partial}{\partial \lambda_{i}}	\wedge \overset{\rightarrow}{l_{i}} + \overset{\rightarrow}{\theta}
\end{equation*}
is indeed a Poisson bivector on the manifold $V\times G$. Thus, by taking the classical limit of Theorem \ref{MAIN}, we recover a result in \cite{Xu-Nonabelian}, which asserts that an $\frakl$-invariant element $\theta \in C^{\infty}(V)\otimes \wedge^2 \frakg$ satisfies the CDYBE if and only if $\pi_{\frakl^{\ast}}+\sum_i \frac{\partial}{\partial \lambda_{i}} \wedge \overset{\rightarrow}{l_{i}}
+ \overset{\rightarrow}{\theta}$ is a Poisson bivector on $V\times G$.
\end{enumerate}

\subsection{More facts about the Gutt twistor}
 \begin{lemma}\label{Lemma2}
We have
\begin{equation*}
(\Delta\otimes_{R} \id)\Theta_{\mathrm{Gutt}}\cdot (\overline{F\ast_{\PBW}})_{12} =
( \overline{\overline{F\ast_{\PBW}}} )_{12}^{(3)} \cdot (\Delta\otimes_{R} \id)\Theta_{\mathrm{Gutt}} \in \cH\otimes_{R} \cH\otimes_{R}\cH,	
\end{equation*}
where
\begin{equation*}
(\overline{F\ast_{\PBW}})_{12}:=\overline{F\ast_{\PBW}} \otimes 1 \in \cH\otimes_{R} \cH\otimes_{R}\cH, 	
\end{equation*}
and $\overline{\overline{F \ast_{\PBW}}})^{(3)}_{12}  \in  \cH\otimes_{R} \cH\otimes_{R}\cH$ is defined by
\begin{align*}
\sum_{p,r,s,k}\sum_{1 \leq i_1,\cdots, i_k \leq N} & \frac{\hbar^{k+p}}{k!}(\id\otimes_{R}\Delta) \Delta\left(\frac{\partial^k b_{p,rs}}{\partial \lambda_{i_1}\cdots \partial \lambda_{i_k}}\ast_{\PBW}\right) \cdot (U_{p,r}\otimes U_{p,s} \otimes (l_{i_1}\cdots l_{i_k})).
\end{align*}
\end{lemma}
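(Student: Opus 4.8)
The plan is to reduce the identity to a statement internal to the quantum groupoid $\cD\otimes U(\frakl)\llbracket\hbar\rrbracket$ and then to commute $\Theta_{\mathrm{Gutt}}$ through the multiplication operators built from $F$. First I would strip the inert factors. In both $(\overline{F\ast_{\PBW}})_{12}$ and $(\overline{\overline{F\ast_{\PBW}}})^{(3)}_{12}$ the elements $U_{p,r}$ and $U_{p,s}$ occupy the $U(\frakg)$-slots of the first and second legs, whereas $(\Delta\otimes_R\id)\Theta_{\mathrm{Gutt}}$ has trivial $U(\frakg)$-content in those two legs (its first leg is a pure $V$-operator). Since in $\cH=\cD\otimes U(\frakg)\llbracket\hbar\rrbracket$ the differential operators on $V$ commute with the left-invariant operators on $G$, these factors pass unchanged through both sides. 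Hence, by linearity in $F$, it suffices to prove for a single $b\in C^\infty(V)$ that
\[
(\Delta\otimes_R\id)\Theta_{\mathrm{Gutt}}\cdot\bigl(\Delta(b\ast_{\PBW})\otimes 1\bigr)=\Psi_b\cdot(\Delta\otimes_R\id)\Theta_{\mathrm{Gutt}},
\]
where $\Psi_b:=\sum_{k}\frac{\hbar^{k}}{k!}\sum_{i_1,\dots,i_k}(\id\otimes_R\Delta)\Delta\bigl(\frac{\partial^{k} b}{\partial\lambda_{i_1}\cdots\partial\lambda_{i_k}}\ast_{\PBW}\bigr)\cdot(1\otimes 1\otimes l_{i_1}\cdots l_{i_k})$ is the single-function analogue of $(\overline{\overline{F\ast_{\PBW}}})^{(3)}_{12}$, an identity now taking place in $\bigl(\cD\otimes U(\frakl)\llbracket\hbar\rrbracket\bigr)^{\otimes_R 3}$.

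Next I would prove this reduced identity by tracking how the $V$-derivatives carried by $\Theta_{\mathrm{Gutt}}$ move past $b\ast_{\PBW}$. The twistor $\Theta_{\mathrm{Gutt}}$ is the operator form of the Gutt formula~\eqref{GuttStarProduct}: each homogeneous piece carries $m$ pairs $(\partial_{\lambda_j},\overset{\rightarrow}{l_j})$, with the $\lambda$-derivatives acting on the first argument, the left-invariant operators $\overset{\rightarrow}{l_{j_1}}\cdots\overset{\rightarrow}{l_{j_m}}\in U(\frakl)$ lodged in the second leg, and $\ast_{\PBW}$ realized by bidifferential operators distributed over the two legs; because the $\partial_{\lambda_j}$ commute, the $l_j$'s enter symmetrized, i.e.\ as PBW-ordered products. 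Applying $\Delta\otimes_R\id$ and using that $\Delta$ is an algebra homomorphism on $\cD\llbracket\hbar\rrbracket$ with $\Delta(\partial_{\lambda_j})=\partial_{\lambda_j}\otimes 1+1\otimes\partial_{\lambda_j}$ distributes the $V$-derivatives over the first two legs while the $U(\frakl)$-factors remain in the third. Commuting the result to the right of $\Delta(b\ast_{\PBW})$, those derivatives that reach $b$ produce the functions $\frac{\partial^{k}b}{\partial\lambda_{i_1}\cdots\partial\lambda_{i_k}}$; their $\ast_{\PBW}$-multiplication operators, spread over three legs by $(\id\otimes_R\Delta)\Delta$, assemble the left factor $\Psi_b$, their partner operators peel off as the third-leg shift $l_{i_1}\cdots l_{i_k}$, and the derivatives that do not reach $b$ recombine into the residual $(\Delta\otimes_R\id)\Theta_{\mathrm{Gutt}}$ on the right. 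The combinatorial weights agree because choosing which $k$ of the $m$ pairs reach $b$ supplies a factor $\binom{m}{k}$ turning $\frac{1}{m!}$ into $\frac{1}{k!(m-k)!}$.

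The tools making this rigorous are the associativity relation $(f\ast_{\PBW})\cdot(g\ast_{\PBW})=(f\ast_{\PBW}g)\ast_{\PBW}$ and the monomial identity~\eqref{MonoidalPBW}, which control how products of single $\ast_{\PBW}$-multiplication operators recombine, together with the algebra-homomorphism property of the coproduct on $\cD\llbracket\hbar\rrbracket$. The step I expect to be the main obstacle is that $\partial_{\lambda_j}$ is \emph{not} a derivation of $\ast_{\PBW}$: commuting a bare $\lambda$-derivative past $b\ast_{\PBW}$ generates, beyond the naive Leibniz terms, corrections proportional to the structure constants of $\frakl$. These are exactly the terms absorbed by the paired operators $\overset{\rightarrow}{l_j}$ in the third leg, so the commutation closes only if each pair $(\partial_{\lambda_j},\overset{\rightarrow}{l_j})$ is kept together rather than treating the $V$-derivatives in isolation. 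A cleaner alternative, trading this bookkeeping for the known associativity of the Gutt product, is to let both sides act as left-invariant tri-differential operators on test functions on $V\times L$ and to invoke the associativity of $\ast_{\mathrm{Gutt}}$ together with the defining properties of a compatible star product; the cost there is careful handling of the $\otimes_R$-structure of the module actions.
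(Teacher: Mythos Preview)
Your reduction step---stripping the inert $U(\frakg)$-factors $U_{p,r},U_{p,s}$ and linearizing in $b\in C^\infty(V)$---is exactly right, and the paper uses it implicitly as well. Where you and the paper diverge is in the \emph{order} of your two proposed strategies. The ``cleaner alternative'' you sketch at the end, invoking the associativity of the Gutt star product, is precisely the paper's argument: one writes $\Theta_{\mathrm{Gutt}}=\sum_q(\Theta_{\mathrm{Gutt}})_{1,q}\otimes_R(\Theta_{\mathrm{Gutt}})_{2,q}$ with $(\Theta_{\mathrm{Gutt}})_{1,q}\in\cD\llbracket\hbar\rrbracket$, and then the single identity
\[
\sum_q\Delta\bigl((\Theta_{\mathrm{Gutt}})_{1,q}\cdot(b\ast_{\PBW})\bigr)\otimes_R(\Theta_{\mathrm{Gutt}})_{2,q}
=\Psi_b\cdot(\Delta\otimes_R\id)\Theta_{\mathrm{Gutt}}
\]
is read off directly from associativity of $\ast_{\mathrm{Gutt}}$ together with the explicit formula~\eqref{GuttStarProduct}; the rest is bookkeeping using that $\Delta$ is an algebra map. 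So your backup plan is the paper's main line.

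Your primary plan---pushing the $\partial_{\lambda_j}$'s through $b\ast_{\PBW}$ by a Leibniz-type expansion and then reassembling---has a real soft spot, which you yourself flag. The PBW bidifferential operators $B_k$ do \emph{not} have constant coefficients on $\frakl^\ast$ (already $B_1$ carries a factor $c_{ij}^k\lambda_k$), so $[\partial_{\lambda_j},\,b\ast_{\PBW}]$ is not simply $(\partial_{\lambda_j}b)\ast_{\PBW}$. Your assertion that the extra commutator terms are ``exactly the terms absorbed by the paired operators $\overset{\rightarrow}{l_j}$ in the third leg'' is correct only in the aggregate---it \emph{is} the content of Gutt associativity---but you have not given an argument for it that stands on its own. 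If you actually carry this out you will be re-proving the associativity of $\ast_{\mathrm{Gutt}}$ from scratch, which is circular relative to your stated alternative. I would promote the associativity argument to the main proof and drop the combinatorial route.
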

\begin{proof}
The PBW star product $\ast_{\PBW}$ on $C^{ \infty}(V)\llbracket \hbar \rrbracket$ is determined by  an element, called PBW twistor,
\begin{equation*}
\Theta_{\PBW}=\sum_i (\Theta_{\PBW})_{1,i}\otimes_{R}  (\Theta_{\PBW})_{2,i} \in \cD\llbracket \hbar \rrbracket \otimes_{C^{\infty}(V)\llbracket \hbar \rrbracket}
\cD\llbracket \hbar \rrbracket.
\end{equation*}
By Equation \eqref{GuttStarProduct}, we have
\begin{align*}
f\ast_{\mathrm{Gutt}}g & =  \Theta_{\mathrm{Gutt}}(f\, , g) \\
& = \sum_i\sum_k\sum_{i_1, \cdots, i_k} \frac{\hbar^{k}}{k!}
\biggl((\Theta_{\PBW})_{1,i}\frac{\partial^k}{\partial \lambda_{i_1}\cdots \partial\lambda_{i_k}}
\biggr)\otimes_{R}\biggl( (\Theta_{\PBW})_{2,i} \overset{\rightarrow}{l_{i_1}}\cdots
\overset{\rightarrow}{l_{i_k}} \biggr) (f\, , g),	
\end{align*}
for all $f , g\in C^{\infty}(V)$. Thus, the Gutt twistor has the form
\[
\Theta_{\mathrm{Gutt}}=\sum_{q\in I}(\Theta_{\mathrm{Gutt}})_{1,q}\otimes_{R}  (\Theta_{\mathrm{Gutt}})_{2,q}
\]
for some countable index set $I$, where $(\Theta_{\mathrm{Gutt}})_{1,q} \in \cD\llbracket \hbar \rrbracket, (\Theta_{\mathrm{Gutt}})_{2,q} \in \cH$.
Since the Gutt star product is associative, it follows that the Gutt twistor satisfies
 \begin{align*}
& \sum_{q} \bigl(\Delta ((\Theta_{\mathrm{Gutt}})_{1,q}\cdot(b_{p,rs}\ast_{\PBW})\bigr)\otimes_{R} (\Theta_{\mathrm{Gutt}})_{2,q}=\sum_{q}\sum_{k}\sum_{i_1, \cdots, i_k} \\
&\qquad\frac{\hbar^{k}}{k!}\biggl((\id\otimes \Delta)\Delta( \frac{\partial^k b_{p,rs}}{\partial \lambda_{i_1} \cdots \partial \lambda_{i_k}}\ast_{\PBW})\biggl)\cdot\biggl(
\Delta (\Theta_{\mathrm{Gutt}})_{1,q}\otimes(l_{i_1}\cdots l_{i_k})(\Theta_{\mathrm{Gutt}})_{2,q}\biggr).
\end{align*}	
Thus, we have
\begin{align*}
&(\Delta\otimes_{R} \id)\Theta_{\mathrm{Gutt}}\cdot (\overline{F\ast_{\PBW}})_{12}\\
=&\sum_{p,r,s}\sum_{q}\hbar^p\biggl(\bigl(\Delta ((\Theta_{\mathrm{Gutt}})_{1,q}\cdot
(b_{p,rs}\ast_{\PBW})\bigr)\otimes_{R} (\Theta_{\mathrm{Gutt}})_{2,q}\biggr)\cdot
\biggl(U_{p,r}\otimes U_{p,s} \otimes 1\biggr)  \\
=&\sum_{p,r,s}\sum_q \sum_{k}\sum_{i_1,\cdots,i_k} \frac{\hbar^{k+p}}{k!}\biggl((\id\otimes \Delta)\Delta\left( \frac{\partial^k b_{p,rs}}{\partial \lambda_{i_1}\cdots \partial \lambda_{i_k}}\ast_{\PBW}\right)\biggl) \\
&\qquad\cdot\biggl(\Delta (\Theta_{\mathrm{Gutt}})_{1,q} \otimes(l_{i_1}\cdots l_{i_k})(\Theta_{\mathrm{Gutt}})_{2,q}\biggr)\! \cdot \!\biggl(U_{p,r}\otimes U_{p,s} \otimes 1\biggr) \\
= &\sum_{p,r,s}\sum_q \sum_{k}\sum_{i_1,\cdots,i_k} \frac{\hbar^{k+p}}{k!}\biggl((\id\otimes \Delta)\Delta\left( \frac{\partial^k b_{p,rs}}{\partial \lambda_{i_1}\cdots \partial \lambda_{i_k}}\ast_{\PBW}\right) \biggl) \\
&\qquad \cdot \biggl((U_{p,r}\otimes U_{p,s})\cdot\Delta (\Theta_{\mathrm{Gutt}})_{1,q}\biggr)
\otimes\biggl((l_{i_1}\cdots l_{i_k})(\Theta_{\mathrm{Gutt}})_{2,q}\biggr)\\
=& ( \overline{\overline{F\ast_{\PBW}} } )_{12}^{(3)}\cdot(\Delta\otimes_{R} \id)\Theta_{\mathrm{Gutt}}.
\end{align*}
\end{proof}

\begin{lemma}\label{Exchange}
For any polynomial function $g \in \mathrm{Pol}(\frakl^{\ast}) \subset C^{\infty}(V)$
and $\frakl$-invariant element $F \in C^{\infty}(V)\llbracket \hbar \rrbracket \otimes U(\frakg)\otimes U(\frakg)$, we have
\begin{equation*}
\Delta(g\ast_{\mathrm{Gutt}})\cdot \overline{F\ast_{\PBW}} = \overline{F\ast_{\PBW}} \cdot
\Delta(g\ast_{\mathrm{Gutt}}).
\end{equation*}
\end{lemma}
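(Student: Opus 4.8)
The plan is to reduce the statement to the case where $g$ is a linear function on $\frakl^{\ast}$, and then to verify the commutation directly by matching the two resulting commutators against the $\frakl$-invariance condition on $F$.

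First I would exploit that the assignment $g \mapsto g\ast_{\mathrm{Gutt}}$, read as left multiplication by $g$ under the associative Gutt star product, is multiplicative, $(g_1\ast_{\mathrm{Gutt}}g_2)\ast_{\mathrm{Gutt}} = (g_1\ast_{\mathrm{Gutt}})\cdot(g_2\ast_{\mathrm{Gutt}})$ in $\cH$, and $\fK\llbracket\hbar\rrbracket$-linear in $g$. Since the coproduct $\Delta$ is an algebra homomorphism, the set of $g$ for which $\Delta(g\ast_{\mathrm{Gutt}})$ commutes with $\overline{F\ast_{\PBW}}$ is a $\fK\llbracket\hbar\rrbracket$-submodule closed under $\ast_{\mathrm{Gutt}}$. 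For $g_1,g_2\in\mathrm{Pol}(\frakl^{\ast})$ one has $g_1\ast_{\mathrm{Gutt}}g_2 = g_1\ast_{\PBW}g_2$, because the left-invariant vector fields in \eqref{GuttStarProduct} annihilate functions pulled back from $V$; and by the PBW map \eqref{PBWmap}, which identifies $(\mathrm{Pol}(\frakl^{\ast})\llbracket\hbar\rrbracket,\ast_{\PBW})$ with $U(\frakl_{\hbar})$, the linear functions $\lambda_1,\dots,\lambda_N$ together with constants $\ast_{\PBW}$-generate all of $\mathrm{Pol}(\frakl^{\ast})$. As the constant function lies trivially in this submodule, it suffices to treat $g=l$ for $l\in\frakl$, regarded both as the linear coordinate function on $\frakl^{\ast}$ and as the generator $l\in\frakl\subset U(\frakg)$.

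For linear $g=l$ the derivatives of order $\geq 2$ in \eqref{GuttStarProduct} vanish, so
\[
l\ast_{\mathrm{Gutt}} = (l\ast_{\PBW})\otimes 1 + \hbar\,(1\otimes l)\ \in\ \cD\otimes U(\frakl)\subset\cH,
\]
whence
\[
\Delta(l\ast_{\mathrm{Gutt}}) = \Delta(l\ast_{\PBW}) + \hbar\,(l\otimes 1 + 1\otimes l),
\]
the first summand lying in the $\cD\otimes_{R}\cD$-part of $\cH\otimes_{R}\cH$ and the second (a coproduct in $U(\frakg)$) in the $U(\frakg)\otimes U(\frakg)$-part. Here I would use that, since the differential-operator factor $\cD$ and the $U(\frakg)$-factor of $\cH$ commute, the product on $\cH\otimes_{R}\cH$ factorizes through $(\cD\otimes_{R}\cD)\otimes(U(\frakg)\otimes U(\frakg))$; consequently $\Delta(l\ast_{\PBW})$ commutes with the $U(\frakg)\otimes U(\frakg)$-legs of $\overline{F\ast_{\PBW}}$ and the element $l\otimes1+1\otimes l$ commutes with its $\cD\otimes_{R}\cD$-legs. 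Writing $\overline{F\ast_{\PBW}}=\sum_{p,r,s}\hbar^{p}\Delta(b_{p,rs}\ast_{\PBW})\otimes(U_{p,r}\otimes U_{p,s})$ as in \eqref{Fstaroverline}, the commutator $[\Delta(l\ast_{\mathrm{Gutt}}),\overline{F\ast_{\PBW}}]$ therefore splits into a purely $\cD$-type contribution from the first summand and a purely $U(\frakg)$-type contribution from the second.

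The crux is the $\cD$-type contribution. Using $(a\ast_{\PBW})\cdot(b\ast_{\PBW})=(a\ast_{\PBW}b)\ast_{\PBW}$ (the identity preceding \eqref{MonoidalPBW}) and that $\Delta$ on $\cD$ is an algebra homomorphism, it equals $\hbar\sum_{p,r,s}\hbar^{p}\Delta\big((\ad^{*}_{l}b_{p,rs})\ast_{\PBW}\big)\otimes(U_{p,r}\otimes U_{p,s})$, the key input being the exact identity
\[
l\ast_{\PBW}b - b\ast_{\PBW}l = \hbar\,\ad^{*}_{l}(b),
\]
valid for $l$ linear and any $b$: it holds because the PBW map \eqref{PBWmap} is $\frakl$-equivariant, so $\ad_{l}=[l,-]$ on $U(\frakl_{\hbar})$ transports to $\hbar$ times the coadjoint derivation $\ad^{*}_{l}$ on $\mathrm{Pol}(\frakl^{\ast})$ with no higher-order corrections, and both sides are formal series of bidifferential operators agreeing on polynomials, hence on all smooth $b$. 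Thus this contribution is $\hbar\,\overline{\big((\ad^{*}_{l}\otimes1\otimes1)F\big)\ast_{\PBW}}$, while the $U(\frakg)$-type contribution is $\hbar\,\overline{\big([1\otimes l\otimes1+1\otimes1\otimes l,F]\big)\ast_{\PBW}}$. Adding them, the total commutator is $\hbar$ times $\overline{(\,\cdot\,)\ast_{\PBW}}$ applied to $(\ad^{*}_{l}\otimes1\otimes1)F+[1\otimes l\otimes1+1\otimes1\otimes l,F]$, which vanishes by the $\frakl$-invariance of $F$. I expect the main obstacle to be pinning down the sign conventions so that the coadjoint derivation produced by $\ast_{\PBW}$ matches exactly the $\ad^{*}_{l}$ in the invariance condition, together with the bookkeeping justifying the factorization of the product on $\cH\otimes_{R}\cH$ and the exactness (absence of $O(\hbar^{2})$ terms) of the commutator identity for linear $l$.
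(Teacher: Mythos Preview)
Your proposal is correct and follows essentially the same approach as the paper: both reduce the claim to the linear case and then verify that $[\Delta(\lambda_i\ast_{\PBW}+\hbar l_i),\overline{F\ast_{\PBW}}]=0$ is exactly the $\frakl$-invariance condition on $F$, using the identity $[\lambda_i\ast_{\PBW},b\ast_{\PBW}]=\hbar(\ad^{*}_{l_i}b)\ast_{\PBW}$. The only cosmetic difference is in the reduction step: the paper writes out an explicit Taylor-type expansion $g\ast_{\mathrm{Gutt}}=\sum\frac{1}{k!}\partial^{k}g|_{0}\,(\lambda_{i_1}\ast_{\PBW}+\hbar l_{i_1})\cdots(\lambda_{i_k}\ast_{\PBW}+\hbar l_{i_k})$, whereas you argue abstractly that the set of admissible $g$ is a $\fK\llbracket\hbar\rrbracket$-subalgebra for $\ast_{\mathrm{Gutt}}=\ast_{\PBW}$ containing the linear generators.
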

\begin{proof}
For any polynomial function $g\in \mathrm{Pol}(\frakl^{\ast})$, we claim that
\begin{equation}\label{Expansion}
g\ast_{\mathrm{Gutt}} = \sum_{k} \frac{1}{k!} \sum_{i_1, \cdots, i_k}\frac{\partial^k g}{\partial \lambda_{i_1} \cdots \partial \lambda_{i_k}} \bigg |_{\lambda_{i_1}=\cdots=\lambda_{i_k}=0} (\lambda_{i_1}\ast_{\PBW} + \hbar l_{i_1} )\cdots (\lambda_{i_k}\ast_{\PBW} + \hbar l_{i_k}).
\end{equation}	
In fact, for any $f\in C^{\infty}(V\times G)$, we have
\begin{align*}
& g\ast_{\mathrm{Gutt}}f \\
= &\sum_{k}\sum_{i_1,\cdots,i_k}\frac{\hbar^{k}}{k!}\frac{\partial^k g_1}{\partial \lambda_{i_1}\cdots \partial \lambda_{i_k}}\ast_{\PBW}(\overset{\longrightarrow}{l_{i_1}} \cdots \overset{\longrightarrow}{l_{i_k}}f) \\
= &\sum_{k}\sum_{i_1,\cdots,i_k} \frac{\hbar^{k}}{k!} \left(\frac{\partial^k g_1}{\partial \lambda_{i_1}\cdots \partial \lambda_{i_k}} \ast_{\PBW}\right) \cdot (\overset{\longrightarrow}{l_{i_1}} \cdots \overset{\longrightarrow}{l_{i_k}})(f) \qquad\quad  \text{by Equation \eqref{MonoidalPBW}}\\
= &\sum_{k,m}\sum_{i_1,\cdots,i_k}\sum_{j_1,\cdots, j_m} \frac{1}{k!}\frac{1}{m!} \biggl(\frac{\partial^l}{\partial \lambda_{j_1} \cdots \partial\lambda_{j_m} }
\left(\frac{\partial^k g_1}{\partial \lambda_{i_1}\cdots \partial \lambda_{i_k}}\right)
\bigg |_{\lambda_{j_1}=\cdots=\lambda_{j_m}=0} \\
&\qquad\qquad\qquad\qquad\qquad (\lambda_{j_1}\ast_{\PBW}) \cdots (\lambda_{j_m}\ast_{\PBW})(\hbar\overset{\longrightarrow}{l_{i_1}})\cdots
(\hbar\overset{\longrightarrow}{l_{i_k}})\biggr)(f)\\
= &\sum_{k}\frac{1}{k!}\sum_{i_1,\cdots,i_k}
\left(\frac{\partial^k g_1}{\partial \lambda_{i_1}\cdots \partial \lambda_{i_k}}\right)
\bigg |_{\lambda_{i_1}=\cdots=\lambda_{i_k}=0}(\lambda_{i_1}\ast_{\PBW} + \hbar\overset{\longrightarrow}{l_{i_1}} ) \cdots (\lambda_{i_k}\ast_{\PBW} + \hbar\overset{\longrightarrow}{l_{i_k}}) (f).	
\end{align*}
On the other hand, since for all $g \in C^{\infty}(V)$ and linear functions $\lambda_i$ on $\frakl^\ast$,
\begin{equation*}
(\lambda_i\ast_{\PBW})\cdot (g\ast_{\PBW})- (g\ast_{\PBW})\cdot (\lambda_i\ast_{\PBW})
= \hbar ( \ad^{\ast}_{l_i}g),
\end{equation*}
it follows that an element $F\in C^{\infty}(V)\llbracket \hbar \rrbracket\otimes U(\frakg)\otimes U(\frakg)$ is $\frakl$-invariant if and only if
\begin{equation}\label{Eq: Delta commute with Fast}
[\Delta(\lambda_i\ast_{\PBW} \, + \, \hbar l_i)\, ,\overline{F\ast_{\PBW}} ]=0 \in \cH\otimes_{R} \cH.
\end{equation}
Combining \eqref{Expansion} with \eqref{Eq: Delta commute with Fast}, we obtain the desired relation.
\end{proof}

\begin{lemma}\label{Lemma4}
If $F\in C^{\infty}(V)\llbracket \hbar \rrbracket\otimes U(\frakg)\otimes U(\frakg)$ is $\frakl$-invariant, then we have
\begin{equation*}
(\id\otimes_{R} \Delta)\Theta_{\mathrm{Gutt}}\cdot (\overline{F\ast_{\PBW}} )_{23} =
( \overline{\overline{F\ast_{\PBW}}} )_{23} \cdot (\id\otimes_{R} \Delta)\Theta_{\mathrm{Gutt}} \in \cD\otimes_{R} \cH\otimes_{R} \cH,	
\end{equation*}
where $ (\overline{F\ast_{\PBW}} )_{23} = 1 \otimes \overline{F\ast_{\PBW}}$, and
\[
(\overline{ \overline{F\ast_{\PBW}}})_{23}= \sum_{p,r,s} \hbar^p (\id\otimes_{R}\Delta)\Delta(b_{p,rs}\ast_{\PBW}) \cdot (1\otimes U_{p,r} \otimes U_{p,s}).
\]
\end{lemma}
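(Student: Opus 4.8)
The plan is to run the same computation that proves Lemma~\ref{Lemma2}, but in the second and third tensor slots rather than the first two, and to supply the one new ingredient that this change forces upon us: the $\frakl$-invariance of $F$, in the form of Lemma~\ref{Exchange}. First I would substitute the explicit expansion of the Gutt twistor used in the proof of Lemma~\ref{Lemma2}, placing its first leg $(\Theta_{\PBW})_{1,i}\tfrac{\partial^k}{\partial\lambda_{i_1}\cdots\partial\lambda_{i_k}}$ in slot~$1$ and the coproduct of its second leg $(\Theta_{\PBW})_{2,i}\,\overset{\rightarrow}{l_{i_1}}\cdots\overset{\rightarrow}{l_{i_k}}$ across slots~$2$ and~$3$. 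Since $(\overline{F\ast_{\PBW}})_{23}=1\otimes\overline{F\ast_{\PBW}}$ is trivial in slot~$1$, right-multiplication concentrates the whole interaction in slots~$2,3$.

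Within $\cH=\cD\otimes U(\frakg)\llbracket\hbar\rrbracket$ the factor $\cD$ commutes with the factor $U(\frakg)$, so I would begin by moving the purely differential-operator part $\Delta\big((\Theta_{\PBW})_{2,i}\big)$ of the second leg next to $\Delta(b_{p,rs}\ast_{\PBW})$, combining them by the associativity of the PBW product exactly as in Lemma~\ref{Lemma2}; together with the slot-$1$ derivatives this is what should reorganize, coassociatively, into the $\cD$-content $(\id\otimes_{R}\Delta)\Delta(b_{p,rs}\ast_{\PBW})$ of $(\overline{\overline{F\ast_{\PBW}}})_{23}$. The enveloping-algebra part of the second leg, $\Delta(\overset{\rightarrow}{l_{i_1}}\cdots\overset{\rightarrow}{l_{i_k}})$, must then be moved past the factors $U_{p,r}\otimes U_{p,s}$ of $F$; this is precisely where the clean $\cD$/$U(\frakg)$ separation breaks down and $\frakl$-invariance becomes essential.

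A left-invariant vector field does not simply commute past the $U(\frakg)$-factors of $F$: moving it produces $\ad^{\ast}$-corrections that feed back into the $\cD$-factor. The content of Lemma~\ref{Exchange}, equivalently of Equation~\eqref{Eq: Delta commute with Fast}, is that for the combined generator $\lambda_i\ast_{\mathrm{Gutt}}=\lambda_i\ast_{\PBW}+\hbar\,\overset{\rightarrow}{l_i}$ these corrections cancel against those coming from the PBW multiplication $\lambda_i\ast_{\PBW}$, so that $[\Delta(\lambda_i\ast_{\mathrm{Gutt}}),\overline{F\ast_{\PBW}}]=0$. I would therefore not split the second leg prematurely, but instead assemble the slot-$2,3$ factors so as to expose these combined generators and apply Lemma~\ref{Exchange} (extended from the generators to all polynomial Gutt-multiplications via the Expansion~\eqref{Expansion}); after this the reorganized $\cD$-content collects into $(\id\otimes_{R}\Delta)\Delta(b_{p,rs}\ast_{\PBW})$ and the whole expression assembles into $(\overline{\overline{F\ast_{\PBW}}})_{23}\cdot(\id\otimes_{R}\Delta)\Theta_{\mathrm{Gutt}}$.

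I expect the main obstacle to be exactly this coupling: because each generator $\lambda_i\ast_{\mathrm{Gutt}}$ mixes a PBW multiplication on the $\frakl^{\ast}$-factor with a vector field on the $L$-factor, the $\cD$-reorganization and the $U(\frakg)$-reorganization cannot be performed independently, and the slot-$1$ derivatives produced by $\Theta_{\mathrm{Gutt}}$ must cancel, in a coordinated way, against the $\ad^{\ast}$-terms produced in slots~$2,3$. Arranging the three-slot bookkeeping so that Lemma~\ref{Exchange} applies verbatim---rather than unwinding this cancellation by hand---is the delicate point; once it is in place, the remaining manipulations are the same associativity and coassociativity identities already used for Lemma~\ref{Lemma2}.
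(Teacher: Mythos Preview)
Your proposal is correct in substance: you have identified the decisive ingredient, Lemma~\ref{Exchange}, and the reason it is needed (the $U(\frakl)$-part of the Gutt second leg does not commute with the $U(\frakg)$-factors of $F$ on its own, only in the combination $\lambda_i\ast_{\PBW}+\hbar\,l_i$). Your plan to ``not split the second leg prematurely'' and instead invoke the commutation $[\Delta(g\ast_{\mathrm{Gutt}}),\overline{F\ast_{\PBW}}]=0$ for polynomial $g$ is exactly the right idea.

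The paper, however, executes this idea via a much shorter route than the explicit Lemma~\ref{Lemma2}-style expansion you outline. Rather than manipulating the three-tensor operators directly, it applies both sides to a triple of test functions $(g_1,g_2,g_3)$ with $g_1$ polynomial, using that a differential operator on $V$ is determined by its values on $S(\frakl)$. Because the first leg of $\Theta_{\mathrm{Gutt}}$ lies purely in $\cD$, feeding in $g_1$ collapses slot~$1$ entirely and leaves precisely $\Delta(g_1\ast_{\mathrm{Gutt}})$ acting in slots~$2,3$; Lemma~\ref{Exchange} then applies verbatim as a single commutation step, and the proof is four lines. Your approach would reach the same endpoint, but the ``delicate three-slot bookkeeping'' you anticipate is real and is exactly what the test-function device eliminates: by evaluating in the first slot one never has to track how the slot-$1$ derivatives interact with the $\ad^\ast$-corrections, since both are already absorbed into the single operator $g_1\ast_{\mathrm{Gutt}}$.
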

\begin{proof}
We first note that two differential operators $D_1 = D_2\in \cD$ if and only if $D_1 g =D_2 g$ for all polynomial functions $g\in S(\frakl)$. Hence, the calculation below, which holds for all $g_1 \in S(\frakl), g_{2}, g_3 \in C^{\infty}(V\times G)$, verifies the statement:
\begin{align*}
&\quad (\id\otimes_{R} \Delta)\Theta_{\mathrm{Gutt}}\cdot( \overline{F\ast_{\PBW}} )_{23}(g_1\, , g_2\, , g_3) = g_1 \ast_{\mathrm{Gutt}}\biggl(\overline{F\ast_{\PBW}}(g_2\, , g_3) \biggr) \\
&= \left(\Delta(g_1 \ast_{\mathrm{Gutt}})\cdot \overline{F\ast_{\PBW}}\right) (g_2, g_3) \qquad \text{by Lemma \ref{Exchange}}  \\
&=\left( \overline{F\ast_{\PBW}} \cdot\Delta(g_1\ast_{\mathrm{Gutt}})\right) (g_2, g_3)  \\
&= \left(( \overline{\overline{F\ast_{\PBW}}} )_{23} \cdot
(\id\otimes_{R} \Delta)\Theta_{\mathrm{Gutt}}\right)(g_1, g_2, g_3).
\end{align*}  	
\end{proof}

\subsection{Proof of Theorem \ref{MAIN}}\label{Sec:ProofMain}
We  assume  that $F\in C^{\infty}(V)\llbracket \hbar \rrbracket\otimes U(\frakg)\otimes U(\frakg)$ is
$\frakl$-invariant. Then for the element $\cF=\overline{F\ast_{\PBW}}\cdot \Theta_{\mathrm{Gutt}} \in \cH\otimes_{R}\cH$, we have
\begin{align}\label{intermediate4}
&\quad (\Delta\otimes_{R}\id)\cF \cdot (\cF\otimes_{R} 1) \notag \\
& = (\Delta\otimes_{R}\id)\overline{F\ast_{\PBW}} \cdot (\Delta\otimes_{R}\id)\Theta_{\mathrm{Gutt}} \cdot (\overline{F\ast_{\PBW}})_{12}\cdot (\Theta_{\mathrm{Gutt}}\otimes_{R} 1) \quad \text{by Lemma \ref{Lemma2}}\notag \\
& = (\Delta\otimes_{R}\id)\overline{F\ast_{\PBW}} \cdot (\overline{\overline{F\ast_{\PBW}}})_{12}^{(3)} \cdot (\Delta\otimes_{R}\id)\Theta_{\mathrm{Gutt}}\cdot (\Theta_{\mathrm{Gutt}}\otimes_{R} 1),
\end{align}
and
\begin{align}\label{intermediate5}
&\quad (\id\otimes_{R}\Delta)\cF \cdot (1\otimes_{R} \cF) \notag \\
& = (\id\otimes_{R}\Delta)\overline{F\ast_{\PBW}}
\cdot (\id\otimes_{R}\Delta)\Theta_{\mathrm{Gutt}} \cdot(\overline{F\ast_{\PBW}})_{23} \cdot
(1\otimes_{R} \Theta_{\mathrm{Gutt}}) \quad \text{by Lemma \ref{Lemma4}}\notag \\
& = (\id\otimes_{R}\Delta)\overline{F\ast_{\PBW}} \cdot (\overline{\overline{F\ast_{\PBW}}})_{23}
\cdot(\id\otimes_{R}\Delta)\Theta_{\mathrm{Gutt}}\cdot(1\otimes_{R} \Theta_{\mathrm{Gutt}}).
\end{align}
Since $\Theta_{\mathrm{Gutt}}$ is a twistor, we also have
\begin{equation*}
(\Delta\otimes_{R}\id)\Theta_{\mathrm{Gutt}} \cdot (\Theta_{\mathrm{Gutt}}\otimes_{R} 1)=
(\id\otimes_{R} \Delta)\Theta_{\mathrm{Gutt}} \cdot (1\otimes_{R} \Theta_{\mathrm{Gutt}})\, .	
\end{equation*}
Comparing Equations \eqref{intermediate4} and \eqref{intermediate5}, we find that $\cF$ satisfies
\begin{equation*}
(\Delta\otimes_{R}\id)\cF \cdot (\cF\otimes_{R} 1) = (\id\otimes_{R}\Delta)\cF \cdot  (1\otimes_{R} \cF),
\end{equation*}
if and only if
\begin{equation*}
(\Delta\otimes_{R}\id)\overline{F\ast_{\PBW}} \cdot (\overline{\overline{F\ast_{\PBW}}})_{12}^{(3)}
= (\id\otimes_{R}\Delta)\overline{F\ast_{\PBW}} \cdot (\overline{\overline{F\ast_{\PBW}}})_{23},
\end{equation*}
which is indeed equivalent to Equation \eqref{Nonabelian-Twist2}.

Moreover, since $(\epsilon\otimes_{R} \id)\Theta_{\mathrm{Gutt}}=(\id\otimes_{R} \epsilon)\Theta_{\mathrm{Gutt}}=1\otimes_{R} 1$, it follows that $\cF$ satisfies Equation \eqref{counitEquation} if and only if $F$ satisfies Equation \eqref{counit2}.
Therefore, $F$ is a smooth dynamical twist if and only if $\cF=\overline{F\ast_{\PBW}}\cdot \Theta_{\mathrm{Gutt}}$ is a twistor of the quantum groupoid $\cH$.

\section{The Enriquez-Etingof {formal} dynamical twist}\label{Sec:4}
Various constructions of equivariant star products on Poisson homogeneous spaces from dynamical twists have been well-studied (see \cites{Donin-Mudrov-1, Enriquez-Etingof2, Enriquez-Etingof-Marshall, Karolinsky-Muzykin-Stolin-Tarasov, Karolinsky-Stolin-Tarasov-2}).
As an application of Theorem~\ref{MAIN}, we provide an approach to construct such equivariant star products from the quantum groupoid point of view.

Recall that a \textit{polarized Lie algebra} is a finite dimensional $\fK$-Lie algebra $\frakg$ which admits a decomposition
\[
\frakg=\frakl\oplus \fraku=\frakl \oplus \fraku_{+} \oplus \fraku_{-},
\]
where $\fraku_{+}$ and $\fraku_{-}$ are Lie subalgebras of $\frakg$ satisfying
$[\frakl\, , \fraku_{+}]\subset \fraku_{+}$ and $[\frakl\, , \fraku_{-}]\subset \fraku_{-}$.

Each element $\lambda \in \frakl^{\ast}$ determines  a skew-symmetric pairing on $\fraku$:
\[
\omega({\lambda}) \colon \wedge^2 \fraku \rightarrow \fK, \quad u \wedge v \mapsto \lambda([u, v]),
\]
for all $u, v\in \fraku$. By fixing an isomorphism $\fraku^{\ast}\cong \fraku$ of $\fK$-vector spaces,
we may view $\omega(\lambda)$ as an element in $ \operatorname{End}(\fraku)$.

A polarized Lie algebra $\frakg$ is called \textit{nondegenerate} if there exists some $\lambda\in \frakl^{\ast}$ such that $\omega(\lambda)$ is nondegenerate, which is equivalent to that $D(\lambda) :=\det \omega(\lambda) \neq 0$.
Let $\widehat{S}(\frakl)[1/D]$ be the localization of the formal completion of $S(\frakl)$ with respect to $D$.
A deformation quantization of $\widehat{S}(\frakl)[1/D]$ is provided by a completed associative $\fK\llbracket \hbar \rrbracket$-algebra $\widehat{S}(\frakl)[1/D]_{\hbar}$~\cite{Enriquez-Etingof2}. Moreover, it is proved that there exists a unique $\frakl$-invariant element
\begin{equation*}
J \in \widehat{S}(\frakl)[1/D]_{\hbar} \otimes U(\frakg)\otimes U(\frakg),
\end{equation*}
called  the \textbf{Enriquez-Etingof {formal} dynamical twist}, satisfying
\begin{equation*}
(\id\otimes \Delta\otimes \id)J \cdot J_{12}^{(3)} =(\id\otimes \id \otimes \Delta)J  \cdot J_{23},
\end{equation*}
and
\begin{equation*}
(\id\otimes \epsilon\otimes \id)F=(\id\otimes \id\otimes \epsilon)F=1\otimes 1\, ,
\end{equation*}
and certain further conditions \cite{Enriquez-Etingof2}.
(See also~\cites{Donin-Mudrov-1, Donin-Mudrov-2} on construction of dynamical twists via generalized Verma modules and the dynamical adjoint functor.)
By analyzing the construction of $J \in \widehat{S}(\frakl)[1/D]_{\hbar} \otimes U(\frakg)\otimes U(\frakg)$ in \cite{Enriquez-Etingof2}, it is not difficult to see that $J$ indeed comes from a smooth dynamical twist
\begin{equation*}
F_{\frakl}^{\frakg}\in C^{\infty}(V)\llbracket \hbar \rrbracket \otimes U(\frakg)\otimes U(\frakg)
\end{equation*}
over an appropriate open submanifold $V\subset \frakl^{\ast}$.
By Theorem \ref{MAIN}, we obtain a twistor
\begin{equation}\label{Eq: cFgl}
\cF_{\frakl}^{\frakg}:= \overline{F_{\frakl}^{\frakg} \ast_{\PBW}} \cdot \Theta_{\mathrm{Gutt}}
\end{equation}
of the quantum groupoid $\cH = \cD\otimes U(\frakg)\llbracket \hbar \rrbracket$. Here $\cD$ is the algebra of differential operators on $V$.

Assume that the closed submanifold $W \subset V$ consisting of characters of 1-dimensional trivial representations of $\frakl$ is nonempty.
Let $G $ be a Lie group integrating $\frakg$ and $L \subset G$ be a Lie subgroup integrating $\frakl \subset \frakg$.
Consider the space of $W$-parameterized family of smooth differential operators on the homogeneous space $G/L$
\begin{equation*}
\mathcal{K}:= C^{\infty}(W) \otimes U(T(G/L)) \llbracket \hbar \rrbracket,	
\end{equation*}
 which is indeed a quantum groupoid over $S:=C^{\infty}(W) \llbracket \hbar \rrbracket$.
\begin{proposition}
The twistor $\cF_{\frakl}^{\frakg}$~\eqref{Eq: cFgl} of the quantum groupoid $\cH$ induces a twistor
\begin{equation*}
\cF_{\frakg/\frakl} \in \cK \otimes_{S} \cK
\end{equation*}
of the quantum groupoid $\cK$, which gives rise to a $W$-parameterized family of left $G$-invariant star products on $G/L$, being a quantization of the $W$-parameterized family of Kirillov-Kostant-Souriau Poisson structures on $G/L$.
\end{proposition}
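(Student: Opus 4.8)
The plan is to specialize the dynamical twist $F:=F_{\frakl}^{\frakg}$ at the characters making up $W$, where it de-dynamicalizes into an ordinary Drinfeld twist on $\frakg$, and then to recognize the resulting $W$-family of star products on $G/L$ as a twistor of $\cK$ via Example~\ref{Ex:cF-star}. For $\lambda\in W$ write $F(\lambda)\in U(\frakg)\otimes U(\frakg)\llbracket\hbar\rrbracket$ for the evaluation and $\overset{\rightarrow}{F(\lambda)}$ for the induced bidifferential operator on $G$. First I would use that $W$ consists of characters, equivalently of coadjoint fixed points $\ad^*_l\lambda=0$ for all $l\in\frakl$, so that $W=([\frakl,\frakl])^{\perp}\cap V$: evaluating the $\frakl$-invariance relation $\ad^*_l\otimes 1\otimes 1\,(F)+[1\otimes l\otimes 1+1\otimes 1\otimes l,F]=0$ at such $\lambda$ annihilates the first term, so $F(\lambda)$ is $\frakl$-invariant in the ordinary sense and $\overset{\rightarrow}{F(\lambda)}$ preserves the right $L$-invariant functions $C^\infty(G/L)\subset C^\infty(G)$, hence descends to $G/L$. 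Next I would restrict the shifted-cocycle equation~\eqref{Nonabelian-Twist2} so that its operators act on $C^\infty(G/L)^{\otimes 3}$: the dynamical shift is carried by the fourth $U(\frakl)$-slot of $F_{12}^{(3)}$ through the operators $\overset{\rightarrow}{l_{i_1}}\cdots\overset{\rightarrow}{l_{i_k}}$, which annihilate right $L$-invariant functions for $k\geq 1$, so only the $k=0$ term survives and $F_{12}^{(3)}$ collapses to $F(\lambda)\otimes 1$. Equation~\eqref{Nonabelian-Twist2} thereby reduces to the ordinary cocycle $(\Delta\otimes\id)F(\lambda)\cdot(F(\lambda)\otimes 1)=(\id\otimes\Delta)F(\lambda)\cdot(1\otimes F(\lambda))$ on $C^\infty(G/L)^{\otimes 3}$, while~\eqref{counit2} yields $(\epsilon\otimes\id)F(\lambda)=(\id\otimes\epsilon)F(\lambda)=1$.

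It follows that for each $\lambda\in W$ the formula $f\star_\lambda g:=\overset{\rightarrow}{F(\lambda)}(f,g)$ is an associative, unital star product on $C^\infty(G/L)\llbracket\hbar\rrbracket$, and it is left $G$-invariant because the left-invariant operators $\overset{\rightarrow}{U_{p,r}}$ commute with the left $G$-action on $G/L$. As $F=1\otimes 1\otimes 1+O(\hbar)$, these products depend smoothly on $\lambda$ and assemble into a single $C^\infty(W)$-linear star product on $C^\infty(W\times G/L)\llbracket\hbar\rrbracket$. Applying Example~\ref{Ex:cF-star} to the quantum groupoid $\cK=C^\infty(W)\otimes U(T(G/L))\llbracket\hbar\rrbracket$ over $S$, this family is encoded by a unique element $\cF_{\frakg/\frakl}\in\cK\otimes_S\cK$ with $\cF_{\frakg/\frakl}=1\otimes_S 1+O(\hbar)$ satisfying the cocycle equation~\eqref{TwistorEquation} and the counit equation~\eqref{counitEquation}; being $1+O(\hbar)$, the associated map $\cF_{\frakg/\frakl}^{\sharp}$ is invertible, so $\cF_{\frakg/\frakl}$ is indeed a twistor. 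By its very construction $\cF_{\frakg/\frakl}$ is the image of the twistor $\cF_{\frakl}^{\frakg}$ of Theorem~\ref{MAIN} under evaluation at $W$ together with the descent $U(\frakg)\to U(T(G/L))$, the Gutt factor $\Theta_{\mathrm{Gutt}}$ contributing trivially by the collapse just described.

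Finally I would identify the classical limit. The semiclassical limit of $F(\lambda)$ is the triangular classical dynamical $r$-matrix $\theta(\lambda)\in\wedge^2\frakg$, and the bivector underlying $\star_\lambda$ is the left $G$-invariant tensor on $G/L$ determined by the image of $\theta(\lambda)$ in $\wedge^2(\frakg/\frakl)\cong\wedge^2 T_{eL}(G/L)$. For the Enriquez--Etingof twist of a nondegenerate polarized $\frakg$, the value of $\theta$ at a character $\lambda\in W$ is exactly the $r$-matrix whose invariant bivector is the Kirillov--Kostant--Souriau Poisson structure on the coadjoint orbit $G\cdot\widetilde\lambda$, where $\widetilde\lambda\in\frakg^*$ extends $\lambda$ by zero on $\fraku$; the nondegeneracy $D(\lambda)=\det\omega(\lambda)\neq 0$ guarantees that the stabilizer of $\widetilde\lambda$ is $L$, so $G\cdot\widetilde\lambda\cong G/L$ (see~\cite{Enriquez-Etingof2}). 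Hence $\star_\lambda$ quantizes this KKS structure and the whole family quantizes the $W$-family of KKS structures. The main obstacle lies in the first step, namely verifying rigorously that the dynamical shift in~\eqref{Nonabelian-Twist2} truly disappears upon restricting its operators to $C^\infty(G/L)$—equivalently, that $\Theta_{\mathrm{Gutt}}$ trivializes on $W\times G/L$—and this is precisely where the hypothesis that $W$ consists of characters, so that $\ad^*_l\lambda=0$ and $\overset{\rightarrow}{l}$ annihilates $C^\infty(G/L)$, is indispensable.
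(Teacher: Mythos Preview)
Your proof is correct and follows the same strategy as the paper's: restrict the right $L$-invariant star product induced by $\cF_{\frakl}^{\frakg}$ to $C^\infty(G/L)$, evaluate along $W$, and package the resulting family $\star_\lambda$ as a twistor of $\cK$, appealing to \cite{Enriquez-Etingof2} for the KKS classical limit; the paper's own argument is in fact terser than yours, since it simply cites \cite{Enriquez-Etingof2} for $\star_\lambda$ being a star product rather than verifying the cocycle directly. The one step you pass over silently is that, after the $U(\frakl)$-shift in $F_{12}^{(3)}$ collapses on $C^\infty(G/L)^{\otimes 3}$, Equation~\eqref{Nonabelian-Twist2} still carries the $\ast_{\PBW}$-product of the $C^\infty(V)$-coefficients, and its reduction to the pointwise product at $\lambda\in W$ holds precisely because a character of $\frakl$ extends to an algebra homomorphism $U(\frakl_\hbar)\to\fK\llbracket\hbar\rrbracket$, whence $(a\ast_{\PBW}b)(\lambda)=a(\lambda)\,b(\lambda)$.
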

\begin{proof}
Note that the twistor $\cF_{\frakl}^{\frakg}$ induces a right $L$-invariant compatible star product $\ast$ on $V \times G$ defined by
\[
f_1\ast f_2 =\cF_{\frakl}^{\frakg}(f_1, f_2),
\]
for all $f_1, f_2\in C^{\infty}(V\times G)$.
Given $g_1, g_2\in C^{\infty}(G/L)$, viewed as right $L$-invariant functions on $V\times G$, their star product $g_1\ast g_2$ is also right $L$-invariant.
Thus, the restriction $(g_1\ast g_2) |_{W\times G}$ descends to an element in $C^{\infty}(W)\otimes C^{\infty}(G/L)\llbracket \hbar \rrbracket$, which can be viewed as a family of products $g_1\star_{\lambda} g_2$ on $C^\infty(G/L)\llbracket \hbar \rrbracket$ parametrized by $\lambda \in W$.
Note that the coadjoint orbit $O_{\lambda}\subset \frakg^{\ast}$ through $\lambda \in W$ is diffeomorphic to $G/L$. Each $\star_{\lambda}$ is a left $G$-invariant star product on $C^{\infty}(G/L)\llbracket \hbar \rrbracket$, whose classical limit is the Kirillov-Kostant-Souriau Poisson structure on $O_{\lambda} \cong G/L$~\cite{Enriquez-Etingof2}. This family $\star_{\lambda}$ of star products corresponds
to a twistor $\cF_{\frakg/\frakl}\in \cK\otimes_{S}\cK$ of the quantum groupoid $\cK$.
\end{proof}

\begin{remark}
Assume that a Lie algebra $\frakg$ admits a decomposition $\frakg = \frakl \oplus \fraku$,
where $\frakl$ is abelian and $\fraku$ is a Lie subalgebra.
It is proved in ~\cite{Karolinsky-Stolin-Tarasov-1} that non-dynamical twists can be produced from dynamical twists. Our construction of the twistor $\cF_{\frakg/\frakl}$ can be viewed as a generalization of this procedure to the case of a Lie algebra pair $(\frakg\, , \frakl)$ arising from the nondegenerate polarized Lie algebra $\frakg$.
\end{remark}

\begin{bibdiv}

\begin{biblist}

\bib{Alekseev-Calaque}{article}{
   author={Alekseev, A.},
   author={Calaque, D.},
   title={Quantization of symplectic dynamical $r$-matrices and the quantum
   composition formula},
   journal={Comm. Math. Phys.},
   volume={273},
   date={2007},
   number={1},
   pages={119--136},
   issn={0010-3616},
}

\bib{Alekseev-Meinrenken}{article}{
 author={Alekseev, A.},
 author={Meinrenken, E.},
 issn={1073-2780},
 issn={1945-001X},
 review={Zbl 1139.17308},
 title={Clifford algebras and the classical dynamical Yang-Baxter equation},
 journal={Math. Res. Lett.},
 volume={10},
 number={2-3},
 pages={253--268},
 date={2003},
 publisher={International Press of Boston, Somerville, MA},
}

\bib{Babelon-Bernard-Billey}{article}{
   author={Babelon, O.},
   author={Bernard, D.},
   author={Billey, E.},
   title={A quasi-Hopf algebra interpretation of quantum $3$-$j$ and $6$-$j$
   symbols and difference equations},
   journal={Phys. Lett. B},
   volume={375},
   date={1996},
   number={1-4},
   pages={89--97},
}

\bib{Chen-Xiang-Xu}{article}{
 author={Chen, Z.},
 author={Xiang, M.},
 author={Xu, P.},
 issn={0010-3616},
 issn={1432-0916},
 title={Hochschild cohomology of dg manifolds associated to integrable distributions},
 journal={Commun. Math. Phys.},
 volume={396},
 number={2},
 pages={647--684},
 date={2022},
 publisher={Springer, Berlin/Heidelberg},
}

\bib{Dito}{article}{
   author={Dito, G.},
   title={Kontsevich star product on the dual of a Lie algebra},
   journal={Lett. Math. Phys.},
   volume={48},
   date={1999},
   number={4},
   pages={307--322},
}

\bib{Donin-Mudrov-1}{article}{
   author={Donin, J.},
   author={Mudrov, A.},
   title={Dynamical Yang-Baxter equation and quantum vector bundles},
   journal={Commum. Math. Phys.},
   volume={254},
   date={2005},
   number={3},
   pages={719--760},
}

\bib{Donin-Mudrov-2}{article}{
   author={Donin, J.},
   author={Mudrov, A.},
   title={Quantum groupoids and dynamical categories},
   journal={J. Algebra},
   volume={296},
   date={2006},
   number={2},
   pages={348--384},
}

\bib{Drinfeld-HopfYangBaxter}{article}{
   author={Drinfeld, V.},
   title={Hopf algebras and the quantum Yang-Baxter equation},
   journal={Dokl. Akad. Nauk SSSR},
   volume={283},
   date={1985},
   number={5},
   pages={1060--1064},
}

\bib{Drinfeld-QuantumGroup}{article}{
author = {Drinfeld, V.},
Title = {Quantum groups},
Journal = {Proc. Int. Congr. Math., Berkeley/Calif., 1986, Vol. 1},
Pages={798-820},
Year = {1987},
}

\bib{Enriquez-Etingof1}{article}{
   author={Enriquez, B.},
   author={Etingof, P.},
   title={Quantization of Alekseev-Meinrenken dynamical $r$-matrices},
   conference={
      title={Lie groups and symmetric spaces},
   },
   book={
      series={Amer. Math. Soc. Transl. Ser. 2},
      volume={210},
      publisher={Amer. Math. Soc., Providence, RI},
   },
   isbn={0-8218-3472-X},
   date={2003},
   pages={81--98},
}

\bib{Enriquez-Etingof2}{article}{
   author={Enriquez, B.},
   author={Etingof, P.},
   title={Quantization of classical dynamical $r$-matrices with nonabelian
   base},
   journal={Commmun. Math. Phys.},
   volume={254},
   date={2005},
   number={3},
   pages={603--650},
   issn={0010-3616},
}

 \bib{Enriquez-Etingof-Marshall}{article}{
   author={Enriquez, B.},
   author={Etingof, P.},
   author={Marshall, I.},
   title={Quantization of some Poisson-Lie dynamical $r$-matrices and
   Poisson homogeneous spaces},
   conference={
      title={Quantum groups},
   },
   book={
      series={Contemp. Math.},
      volume={433},
      publisher={Amer. Math. Soc., Providence, RI},
   },
   isbn={978-0-8218-3713-9},
   date={2007},
   pages={135--175},
}

\bib{Etingof-Varchenko}{article}{
   author={Etingof, P.},
   author={Varchenko, A.},
   title={Solutions of the quantum dynamical Yang-Baxter equation and
   dynamical quantum groups},
   journal={Commmun. Math. Phys.},
   volume={196},
   date={1998},
   number={3},
   pages={591--640},
}

\bib{Etingof-Varchenko-Exchange}{article}{
 author = {Etingof, P.},
 author = {Varchenko, A.},
 title = {Exchange dynamical quantum groups},
 journal = {Commun. Math. Phys.},
 Volume = {205},
 Number = {1},
 Pages = {19--52},
 Year = {1999},
}

\bib{Etingof-Nikshych-root}{article}{
 author={Etingof, P.},
 author={Nikshych, D.},
 issn={0012-7094},
 issn={1547-7398},
 title={Dynamical quantum groups at roots of $1$},
 journal={Duke Math. J.},
 volume={108},
 number={1},
 pages={135--168},
 date={2001},
 publisher={Duke University Press, Durham, NC; University of North Carolina, Chapel Hill, NC},
}

\bib{Gervais-Neveu}{article}{
    author = {Gervais, J.},
    author = {Neveu, A.},
    title = {Novel triangle relation and absence of tachyons in {L}iouville  string field theory},
   journal = {Nucl. Phys. B},
   volume = {238},
   year = {1984},
   number = {1},
   pages = {125--141},
}

\bib{Gutt}{article}{
   author={Gutt, S.},
   title={An explicit $\sp{\ast} $-product on the cotangent bundle of a Lie
   group},
   journal={Lett. Math. Phys.},
   volume={7},
   date={1983},
   number={3},
   pages={249--258},
}

\bib{Felder}{article}{
    author={Felder, G.},
   title={Conformal field theory and integrable systems associated to
   elliptic curves},
   conference={
      title={Proceedings of the International Congress of Mathematicians,
      Vol.\ 1, 2},
      address={Z\"urich},
      date={1994},
   },
   book={
      publisher={Birkh\"auser, Basel},
   },
   isbn={3-7643-5153-5},
   date={1995},
   pages={1247--1255},
}

\bib{Felder-Elliptic}{article}{
  author={Felder, G.},
   title={Elliptic quantum groups},
   conference={
      title={XIth International Congress of Mathematical Physics},
      address={Paris},
      date={1994},
   },
   book={
      publisher={Int. Press, Cambridge, MA},
   },
   isbn={1-57146-030-6},
   date={1995},
   pages={211--218},
}

\bib{Kalmykov-Safronov}{article}{
 author={Kalmykov, A.},
 author={Safronov, P.},
 issn={2050-5094},
 title={A categorical approach to dynamical quantum groups},
 journal={Forum Math. Sigma},
 volume={10},
 pages={57},
 note={Id/No e76},
 date={2022},
 publisher={Cambridge University Press, Cambridge},
}

 \bib{Karolinsky-Muzykin-Stolin-Tarasov}{article}{
   author={Karolinsky, E.},
   author={Muzykin, K.},
   author={Stolin, A.},
   author={Tarasov, V.},
   title={Dynamical Yang-Baxter equations, quasi-Poisson homogeneous spaces,
   and quantization},
   journal={Lett. Math. Phys.},
   volume={71},
   date={2005},
   number={3},
   pages={179--197},
   issn={0377-9017},
}

 \bib{Karolinsky-Stolin-Tarasov-2}{article}{
   author={Karolinsky, E.},
   author={Stolin, A.},
   author={Tarasov, V.},
   title={Dynamical twists and quantization},
   conference={
      title={Algebraic structures and their representations},
   },
   book={
      series={Contemp. Math.},
      volume={376},
      publisher={Amer. Math. Soc., Providence, RI},
   },
   isbn={0-8218-3630-7},
   date={2005},
   pages={289--297},
}

 \bib{Karolinsky-Stolin-Tarasov-1}{article}{
   author={Karolinsky, E.},
   author={Stolin, A.},
   author={Tarasov, V.},
   title={From dynamical to non-dynamical twists},
   journal={Lett. Math. Phys.},
   volume={71},
   date={2005},
   number={3},
   pages={173--178},
   issn={0377-9017},
}

\bib{Kathotia}{article}{
   author={Kathotia, V.},
   title={Kontsevich's universal formula for deformation quantization and
   the Campbell-Baker-Hausdorff formula},
   journal={Internat. J. Math.},
   volume={11},
   date={2000},
   number={4},
   pages={523--551},
}

\bib{LX}{article}{
   author={Liu, Z.-J.},
   author={Xu, P.},
   title={Dirac structures and dynamical $r$-matrices},
   journal={Ann. Inst. Fourier (Grenoble)},
   volume={51},
   date={2001},
   number={3},
   pages={835--859},
   issn={0373-0956},
}

\bib{Lu}{article}{
   author={Lu, J.},
   title={Hopf algebroids and quantum groupoids},
   journal={Int. J. Math.},
   volume={7},
   date={1996},
   number={1},
   pages={47--70},
}

\bib{Mackenzie-Book}{book}{
   author={Mackenzie, K. C. H.},
   title={General theory of Lie groupoids and Lie algebroids},
   series={London Mathematical Society Lecture Note Series},
   volume={213},
   publisher={Cambridge University Press, Cambridge},
   date={2005},
   pages={xxxviii+501},
}

\bib{Nikshych-Turaev-Vainerman}{article}{
 author={Nikshych, D.},
 author={Turaev, V.},
 author={Vainerman, L.},
 issn={0166-8641},
 title={Invariants of knots and $3$-manifolds from quantum groupoids},
 journal={Topol. Appl.},
 volume={127},
 number={1-2},
 pages={91--123},
 date={2003},
 publisher={Elsevier (North-Holland), Amsterdam},
}

\bib{Rinehart}{article}{
   author={Rinehart, G. S.},
   title={Differential forms on general commutative algebras},
   journal={Trans. Amer. Math. Soc.},
   volume={108},
   date={1963},
   pages={195--222},
}

\bib{Xu-QuantumCNRS}{article}{
   author={Xu, P.},
   title={Quantum groupoids and deformation quantization},
   journal={C. R. Acad. Sci. Paris S\'er. I Math.},
   volume={326},
   date={1998},
   number={3},
   pages={289--294},
}

\bib{Xu-R-matrices}{article}{
   author={Xu, P.},
   title={Quantum groupoids associated to universal dynamical $R$-matrices},
   journal={C. R. Acad. Sci. Paris S\'er. I Math.},
   volume={328},
   date={1999},
   number={4},
   pages={327--332},
   issn={0764-4442},
}

\bib{Xu-Quantum}{article}{
   author={Xu, P.},
   title={Quantum groupoids},
   journal={Commum. Math. Phys.},
   volume={216},
   date={2001},
   number={3},
   pages={539--581},
   issn={0010-3616},
}

\bib{Xu-Nonabelian}{article}{
   author={Xu, P.},
   title={Quantum dynamical Yang-Baxter equation over a nonabelian base},
   journal={Commum. Math. Phys.},
   volume={226},
   date={2002},
   number={3},
   pages={475--495},
   issn={0010-3616},
}

\end{biblist}
\end{bibdiv}

\end{document}